\newcommand{\eps}{\varepsilon}
\DeclareMathOperator*{\argmin}{arg\,min}
\DeclareMathOperator*{\argmax}{arg\,max}
\DeclareMathOperator{\proj}{proj}
\newcommand{\field}[1]{\mathbb{#1}}
\newcommand{\R}{\field{R}}
\newcommand{\N}{\field{N}}
\DeclareMathOperator{\Sym}{Sym}
\DeclareMathOperator{\E}{\mathbb{E}}
\newcommand{\inner}[3][n]{\SwitchBracketsizeLeft{#1}\LeftBracketSize\langle#2,#3\SwitchBracketsizeRight{#1}\RightBracketSize\rangle}
\newcommand{\abs}[2][n]{\SwitchBracketsizeLeft{#1}\LeftBracketSize|#2\SwitchBracketsizeRight{#1}\RightBracketSize|}
\newcommand{\norm}[2][n]{\SwitchBracketsizeLeft{#1}\LeftBracketSize\|#2\SwitchBracketsizeRight{#1}\RightBracketSize\|}
\newcommand{\set}[3][b]{\SwitchBracketsizeLeft{#1}\LeftBracketSize\{#2:#3\SwitchBracketsizeRight{#1}\RightBracketSize\}}
\newcommand{\NextScriptStyle}[1]{{\scriptstyle{#1}}}
\newcommand{\NextScriptScriptStyle}[1]{{\scriptscriptstyle{#1}}}
\newcommand{\NextTextStyle}[1]{{\textstyle{#1}}}
\newcommand{\NextDisplayStyle}[1]{{\displaystyle{#1}}}
\newcommand{\SwitchBracketsizeLeft}[1]{
  \ifthenelse{\equal{#1}{b}\OR\equal{#1}{big}}{\let\LeftBracketSize=\bigl}{
    \ifthenelse{\equal{#1}{B}\OR\equal{#1}{Big}}{\let\LeftBracketSize=\Bigl}{
      \ifthenelse{\equal{#1}{g}\OR\equal{#1}{bigg}}{\let\LeftBracketSize=\biggl}{
    \ifthenelse{\equal{#1}{G}\OR\equal{#1}{Bigg}}{\let\LeftBracketSize=\Biggl}{
      \ifthenelse{\equal{#1}{s}\OR\equal{#1}{small}}{\let\LeftBracketSize=\NextScriptStyle}{
        \ifthenelse{\equal{#1}{ss}}{\let\LeftBracketSize=\NextScriptScriptStyle}{
          \ifthenelse{\equal{#1}{t}\OR\equal{#1}{text}}{\let\LeftBracketSize=\NextTextStyle}{
        \ifthenelse{\equal{#1}{d}\OR\equal{#1}{display}}{\let\LeftBracketSize=\NextDisplayStyle}{
          \ifthenelse{\equal{#1}{a}\OR\equal{#1}{auto}}{\let\LeftBracketSize=\left}{
            \let\LeftBracketSize=\relax}}}}}}}}}}
\newcommand{\SwitchBracketsizeRight}[1]{
  \ifthenelse{\equal{#1}{b}\OR\equal{#1}{big}}{\let\RightBracketSize=\bigr}{
    \ifthenelse{\equal{#1}{B}\OR\equal{#1}{Big}}{\let\RightBracketSize=\Bigr}{
      \ifthenelse{\equal{#1}{g}\OR\equal{#1}{bigg}}{\let\RightBracketSize=\biggr}{
    \ifthenelse{\equal{#1}{G}\OR\equal{#1}{Bigg}}{\let\RightBracketSize=\Biggr}{
      \ifthenelse{\equal{#1}{s}\OR\equal{#1}{small}}{\let\RightBracketSize=\NextScriptStyle}{
        \ifthenelse{\equal{#1}{ss}}{\let\RightBracketSize=\NextScriptScriptStyle}{
          \ifthenelse{\equal{#1}{t}\OR\equal{#1}{text}}{\let\RightBracketSize=\NextTextStyle}{
        \ifthenelse{\equal{#1}{d}\OR\equal{#1}{display}}{\let\RightBracketSize=\NextDisplayStyle}{
          \ifthenelse{\equal{#1}{a}\OR\equal{#1}{auto}}{\let\RightBracketSize=\right}{
            \let\RightBracketSize=\relax}}}}}}}}}}
\theoremstyle{plain}
\newtheorem{theorem}{Theorem}[section]
\newtheorem{proposition}[theorem]{Proposition}
\newtheorem{corollary}[theorem]{Corollary}
\theoremstyle{definition}
\newtheorem{rem}[theorem]{Remark}
\newtheorem{assumption}[theorem]{Assumption}
\newtheorem{lemma}[theorem]{Lemma}
\newtheorem{definition}[theorem]{Definition}
\newcommand{\logmessage}[1]{\@latex@warning{#1}}
\newcommand{\ignore}{\logmessage{Text ignored}\@gobble}
\begin{document}

\title{Nonparametric instrumental regression with non-convex constraints}

\author{M Grasmair${}^1$, O Scherzer${}^{1,2}$ and A Vanhems${}^{3}$}
\address{${}^1$Computational Science Center, University of Vienna, Austria}
\address{${}^2$Radon Institute of Computational and Applied Mathematics, Linz, Austria}
\address{${}^3$University of Toulouse, Toulouse Business School and Toulouse School of Economics, France}

\eads{\mailto{markus.grasmair@univie.ac.at}, \mailto{otmar.scherzer@univie.ac.at},
  \mailto{a.vanhems@esc-toulouse.fr}}

\begin{abstract}
This paper considers the nonparametric regression model with an additive error
that is dependent on the explanatory variables. As is common in empirical studies in
epidemiology and economics, it also supposes that valid instrumental variables are
observed. A classical example in microeconomics considers the consumer demand function
as a function of the price of goods and the income, both variables often considered as endogenous.
In this framework, the economic theory also imposes shape restrictions on the demand function,
like integrability conditions. Motivated by this illustration in microeconomics,
we study an estimator of a nonparametric constrained regression
function using instrumental variables by means of Tikhonov regularization.
We derive rates of convergence for the regularized model both in a deterministic
and stochastic setting under the assumption that the true regression function
satisfies a projected source condition including, because of the
non-convexity of the imposed constraints, an additional smallness condition.
\end{abstract}

\ams{Primary 62G08; secondary 62G20; 65J20.}

\submitto{\IP}

\section{Motivation}

We consider the model
\[
Y_i = g(X_i) + \eps_i,\qquad i=1,\ldots,n,
\]
where $(Y_i,X_i)_{i=1,\ldots,n}$ is a sample of observations of size $n$ representing respectively the measured data
and variables effecting the measurements. The function $g$ describes the dependence of the data on the variables,
and $\eps_i$ is a combination of noise (measurement errors)
and \emph{modeling errors}, often resulting from the omittance of relevant variables. The goal is the estimation of the function $g$.
If the modeling errors $\eps$ and the variables $X$ are not dependent,
that is, if the conditional expectation $\E(\eps|X)$ of $\eps$ given $X$ is zero,
then it is possible to identify $g$ by
\begin{equation}\label{eq:g}
g(x) := \E(Y|X=x).
\end{equation}
If, however, the conditional expectation of $\eps$ given $X$ does not vanish,
then this will lead to a biased estimate, as
\[
\E(Y|X=x) = g(x) + \E(\eps|X=x).
\]
The variables $X$ are then called endogenous variables.
This issue of endogeneity typically arises in the presence of modeling errors,
in particular, if variables have been omitted from the model that
simultaneously influence both $X$ and $Y$. This has been illustrated in several applications,
for example in epidemiology (see \cite{Chen.et.al2008,DMS10,VBBG11}) and in economics
(see \cite{W08book} and also the survey~\cite{AngKru01}).
In the classical microeconomic setting of consumer demand, the endogeneity issue has also been raised.
In this framework, the variable $Y$ represents the observed demand of a consumer for $k$ goods,
and the explanatory variables $X$ include the vector of prices $P$ of the goods and the total budget $Z>0$ of the consumer;
the function $g \colon \R^{k}_{> 0} \times \R_{> 0} \to \R_{\ge 0}^k$ denotes the consumer demand.
The problem of price endogeneity has been highlighted in several research articles
(see for example \cite{BrownWalker89,Lewbel01,Matzkin07}).
In an industrial organization framework, the paper by \cite{BerryLEvinsohnPakes95} analyzes demand and supply
in differentiated product markets (like the US automobile industry) and highlight the problem involved
by correlation between prices and product characteristics, some of which are observed by the consumer but not by the econometrician.
Similarly total expenditure endogeneity has been studied in particular for Engel Curves analysis, see for example \cite{bck2007}.
\medskip

One remedy is the usage of \emph{instruments},
that is, different variables $W$, which influence both $P$ and $Z$
but are uncorrelated with $\eps$ (see~\cite{AngKru01} for an overview).
The analysis of nonparametric instrumental regression has been conducted in several
works such as \cite{DFFR11,F03eswc,HalHor05,Vanhems2010}.
Therefore we consider the model
\[
Y = g(X) + \eps
\]
and we assume that the random variable $X=(P,Z)$ is described by instruments $W$
in such a way that $\E(\eps|W) = 0$. Therefore, the equation~\eref{eq:g}
can be transformed into
\begin{equation}\label{eq:T1}
\E(g(X)|W=w)=\E(Y|W=w).
\end{equation}

We assume in the following that the relation between $Y$, $X$ and $W$
is described by a joint density $f_{YXW}\colon\Omega_Y\times\Omega_X\times\Omega_W\to \R_{> 0}$,
where, for simplicity, the finite measure spaces $\Omega_Y$, $\Omega_X$ and $\Omega_W$ are assumed
to be normalized. We consider $L^2$ spaces with respect to this joint probability density and
denote for example by $L^2(\Omega_X)$ functions depending on $P$ and $Z$ only.
In addition, we denote by $f_{YW}$, $f_{XW}$, $f_W$ the corresponding
marginal densities defined by
\begin{eqnarray*}
f_{YW}(y,w) &= \int_{\Omega_X} f_{YXW}(y,x,w)\,dx,\\
f_{XW}(x,w) &= \int_{\Omega_Y} f_{YXW}(y,x,w)\,dy,\\
f_{W}(w) &= \int_{\Omega_X}\int_{\Omega_Y} f_{YXW}(y,x,w)\,dx\,dy.
\end{eqnarray*}

Now assume that the set $\Omega$ is bounded and
$f_{YXW}$ is bounded away from zero.
We consider the operator $T\colon L^2(\Omega_X)\to L^2(\Omega_W)$ defined by
\begin{equation}\label{eq:T2}
T\psi(w)
:= \E(\psi(X)|W=w)
= \int_{\Omega_X} \psi(x)\frac{f_{XW}(x,w)}{f_W(w)}\,dx.
\end{equation}
Then \eref{eq:T1} can be rewritten as the Fredholm integral equation
\begin{equation}\label{eq:eq2}
Tg=h,
\end{equation}
where
\[
h(w)=\E(Y|W=w) = \int_{\Omega_Y} y\,\frac{f_{YW}(y,w)}{f_W(w)}\,dy.
\]

In addition, classical microeconomic theory imposes some shape restrictions on the consumer demand,
and the challenge is to take these constraints into account in the nonparametric estimation of the function $g$.
More precisely, standard micro-economic theory (see~\cite{Var92}) states that the demand is the result
of the maximization of some (unknown) utility function.
That is, there exists some function $u\colon \R^k_{\ge 0} \to \R$ (the utility) such that
\begin{equation}\label{eq:utility}
  g(x) = \argmax\set[b]{u(y)}{y \in \R^k_{\ge 0},\ \inner{y}{p} \le z},
\end{equation}
where $x=(p,z)$.
Here the utility function is assumed to be continuously differentiable,
concave, and strictly monotoneously increasing.
Even though the utility is unknown, the assumption of its existence
(and of utility maximization) has some implications for the
demand function $g$, called the \textit{integrability conditions}.
First, it is rather obvious that $g$ is homogeneous of degree 0,
that is, $g(tx) = g(x)$ for every $t > 0$.
Moreover, the maximum in~\eref{eq:utility} is always attained
at the boundary; more precisely, we have the equality
\begin{equation}\label{eq:budget}
\inner{y}{g(x)} = z;
\end{equation}
this condition is usually called the \emph{budget constraint}.
Finally, defining the \emph{Slutsky matrix}
\[
S_g(x) := \nabla_p g(x) + \partial_z g(x)\cdot g(x)^T,
\]
the conditions
\begin{equation}\label{eq:Slutsky}
S_g(x) = S_g(x)^T
\qquad\textrm{ and }\qquad
S_g(x) \le 0
\end{equation}
hold. That is, the Slutsky matrix is symmetric and negative semi-definite in (almost)
every point $x=(p,z)$.

Therefore, the objective of this work is to recover the function $g$ characterized by
equation \eref{eq:eq2} and satisfying the constraints defined by the Slutsky matrix.

The paper is organized as follows: In Section 2, we present our model, the link with ill-posed
inverse problems in the case where the transform is unknown, and the conditions under which a
regularized solution can be defined. In Section 3 we derive rates of convergence in a deterministic
setting and we extend the results in Section 4 to the statistical setting. 

\section{Constrained Inversion of $T$}

Let now $T$ be the operator defined in~\eref{eq:T2}
(operating on vector valued functions).
Then, in order to recover $g$, we have to solve the equation
\[
Tg = h,
\]
where $h$ denotes the right hand side of~\eref{eq:eq2}
subject to the constraints that $g$ is homogeneous of degree 0 and
satisfies the budget constraint~\eref{eq:budget} and the Slutsky condition~\eref{eq:Slutsky}
almost everywhere in $\Omega := \Omega_X = \Omega_P \times \Omega_Z$.
In the following we will always assume that the set
$\Omega$ is bounded, open, connected and has a Lipschitz boundary.

Apart from the constraints, there are three problems:
First, the operator $T$ is defined by the density $f_{XW}$,
which is not known exactly but can only by estimated up to a certain error $\delta$.
Consequently, we will only have an approximation $T^\delta$ of $T$ available.
Second, the right hand side $h$ is only known up to some error $\gamma$,
as it may be prone to measurement errors (in a deterministic setting)
or is the realization of a random variable (in a stochastic setting),
and, again, it depends on the density $f_{YW}$.
In addition, the assumption $\E(\eps|w) = 0$ need not hold exactly.
Finally, the operator $T$ (and also its approximation $T^\delta$)
is not boundedly invertible in $L^2(\Omega;\R^k)$.
Thus a direct solution of the operator equation
\[
T^\delta g = h^\gamma
\]
does not make sense, as its solution $g^{\delta,\gamma}$ (if it exists) need not be close
to the true solution $g^\dagger$, even if the errors $\delta$ and $\gamma$ are small.
In addition, there is no reason why the exact solution of the perturbed operator
equation (if it exists) should satisfy the required constraints,
in particular, as the constraints are non-linear and describe a non-convex set.

In order to find a solution nevertheless, it is necessary to consider
some kind of regularized solution.
In the following, we consider the application of (constrained) Tikhonov regularization,
where we use the (weighted) first order Sobolev norm as regularization functional.
That is, denoting for $\mu \ge 0$ by
\begin{equation}\label{eq:mu}
  \norm{g}_\mu^2 := \mu\norm{g}_{L^2}^2 + \norm{\nabla g}_{L^2}^2
\end{equation}
the weighted Sobolev norm,
one minimizes, for some regularization parameter $\alpha > 0$
depending on $\delta$ and $\gamma$, the functional
\[
\mathcal{T}_\alpha(g;T^\delta,h^\gamma) :=\norm{T^\delta g - h^\gamma}_{L^2}^2 + \alpha\norm{g}_\mu^2
\]
subject to the constraints of positivity, 0-homogeneity, the Slutsky condition,
and the budget constraint.
For the sake of simplicity, we will omit in the following
the subscripts in the $L^2$-norms and we will assume that
$\Omega$ is compactly contained in $\R_{>0}^k \times \R_{>0}$.

We use in the following the abbreviation
\[
\fl
\mathcal{X} := \set{g \in H^{1}(\Omega;\R^k)}{g \ge 0 \textrm{ is 0-homogeneous, }
  \inner{p}{g(x)} = z \textrm{ and } S_g = S_g^T \le 0\ \textrm{a.e.}}.
\]
Then one can define
\[
g_\alpha^{\delta,\gamma} := \argmin\set{\norm{T^\delta g-h^\gamma}^2 + \alpha\norm{g}_\mu^2}{g \in \mathcal{X}},
\]
provided the Tikhonov functional attains its minimum in $\mathcal{X}$.
In the following, we will show that this is indeed the case.
The proof is based on the direct method in the calculus of variations.
As a first important result, we prove that the set $\mathcal{X}$ is weakly closed in
$H^1(\Omega;\R^k)$, which is not an obvious assertion, as $\mathcal{X}$ is non-convex,
and the weak closedness of a subset of a Hilbert space is usually strongly
tied to its convexity.

\begin{lemma}
  The set $\mathcal{X}$ is weakly sequentially closed in $H^1(\Omega;\R^k)$.
\end{lemma}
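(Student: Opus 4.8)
The plan is to take an arbitrary sequence $(g_n) \subset \mathcal{X}$ with $g_n \rightharpoonup g$ in $H^1(\Omega;\R^k)$ and to verify that each of the four defining constraints---positivity, $0$-homogeneity, the budget constraint, and the Slutsky condition---is inherited by the weak limit $g$. The first three are either convex or affine and closed, so the corresponding constraint sets are weakly closed. Concretely, since $\Omega$ is a bounded Lipschitz domain, the Rellich--Kondrachov theorem gives $g_n \to g$ strongly in $L^2(\Omega;\R^k)$, and after passing to a subsequence we may assume $g_n \to g$ pointwise a.e. Pointwise a.e.\ convergence immediately preserves $g \ge 0$, and, because the budget functional $g \mapsto \inner{p}{g} - z$ is continuous and the dilation underlying $0$-homogeneity is a bi-Lipschitz change of variables, it also preserves $\inner{p}{g(x)} = z$ and $g(tx) = g(x)$. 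Equivalently, $0$-homogeneity is the kernel of the continuous linear map $g \mapsto \inner{p}{\nabla_p g} + z\,\partial_z g$, hence a weakly closed subspace.

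The real difficulty is the Slutsky condition $S_g = S_g^T \le 0$, since the matrix field $S_g = \nabla_p g + \partial_z g\, g^T$ depends nonlinearly on $g$ through the product term $\partial_z g\, g^T$, and the set it describes is non-convex. Symmetry $S_g = S_g^T$ is linear and so poses no problem; the heart of the argument is to pass to the limit in the inequality $S_{g_n} \le 0$ despite the nonlinearity. My strategy rests on a weak--strong product argument, for which I first establish a uniform $L^\infty$ bound. Because $\Omega$ is compactly contained in $\R^k_{>0}\times\R_{>0}$, there are constants $0 < c \le C$ with $p_i \ge c$ and $z \le C$ on $\Omega$; combining $g_n \ge 0$ with the budget identity $\inner{p}{g_n} = z$ yields $c\sum_i g_{n,i} \le \inner{p}{g_n} = z \le C$, whence $\norm{g_n}_{L^\infty} \le C/c$ uniformly in $n$.

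With this bound in hand I would upgrade the convergence of $g_n$: boundedness in $L^\infty$ together with a.e.\ convergence gives, by dominated convergence, $g_n \to g$ strongly in every $L^p(\Omega;\R^k)$ with $p < \infty$, in particular in $L^2$. Since weak convergence in $H^1$ also yields $\nabla_p g_n \rightharpoonup \nabla_p g$ and $\partial_z g_n \rightharpoonup \partial_z g$ weakly in $L^2$, the product of the weakly convergent factor $\partial_z g_n$ with the strongly convergent factor $g_n^T$ converges weakly: for every test field $\Phi \in L^\infty(\Omega;\R^{k\times k})$ one has $\int_\Omega \partial_z g_{n,i}\,(g_{n,j}\Phi_{ij}) \to \int_\Omega \partial_z g_i\,(g_j\Phi_{ij})$, because $g_{n,j}\Phi_{ij} \to g_j\Phi_{ij}$ strongly in $L^2$. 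Consequently $S_{g_n} \rightharpoonup S_g$ weakly in $L^1(\Omega;\R^{k\times k})$.

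The conclusion then follows from convexity of the \emph{target} set rather than of $\mathcal{X}$: the set of fields taking values in the closed convex cone of symmetric negative semi-definite matrices is convex and strongly closed in $L^1$, hence weakly closed by Mazur's lemma. Therefore the weak $L^1$-limit $S_g$ of the sequence $S_{g_n} \le 0$ again satisfies $S_g = S_g^T \le 0$ a.e., so $g \in \mathcal{X}$. I expect the weak--strong passage to the limit in the nonlinear term $\partial_z g_n\, g_n^T$, made possible by the uniform $L^\infty$ bound, to be the crux of the proof; once the nonlinearity has been handled in this way, convexity of the target cone does the rest.
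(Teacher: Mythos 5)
Your proof is correct and follows essentially the same route as the paper's: split $\mathcal{X}$ into the convex constraints (weakly closed for free) and the Slutsky condition, establish weak--weak continuity of $g \mapsto S_g$ via the Rellich--Kondrachov compact embedding together with a weak--strong product argument, and conclude from the weak closedness in $L^1$ of the convex cone of symmetric negative semi-definite matrix fields. The uniform $L^\infty$ bound you extract from positivity and the budget constraint is a nice structural observation but is not actually needed: strong $L^2$ convergence of $g_n$ (already supplied by Rellich--Kondrachov) suffices to pass to the limit in $\int_\Omega \partial_z g_{n,i}\, g_{n,j}\, \Phi_{ij}$ for bounded test fields $\Phi$, which is exactly the paper's argument.
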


\begin{proof}
  Obviously the set of non-negative $0$-homogeneous functions
  satisfying the budget constraint $\inner{p}{g(x)} = z$ is convex and closed in $H^1(\Omega;\R^k)$,
  implying that it is also weakly closed.

  Next we show that the mapping $S\colon H^1(\Omega;\R^k) \to L^1(\Omega;\R^{k\times k})$,
  \[
  g \mapsto S(g) = \nabla_p g + \partial_z g \cdot g^T
  \]
  is weak--weak continuous.
  To that end assume that the sequence $(g_n)_{n\in\N}$
  weakly converges to $g \in H^1(\Omega;\R^k)$.
  Then $\nabla g_n$ weakly converges to $\nabla g$ in $L^2(\Omega;\R^{k\times(k+1)})$
  (which in particular implies that the sequence is bounded)
  and the Rellich--Kondrachov compactness theorem (see~\cite[Thm.~6.2]{Ada75}) implies
  that the functions $g_n$ converge strongly to $g$
  with respect to the $L^2$ topology.
  Thus, if $1 \le i,j \le k$ and $u \in L^2(\Omega;\R)$, we have
  \begin{eqnarray*}
    \fl
  \abs{\inner{\partial_z g_n^{(i)} g_n^{(j)}-\partial_z g^{(i)}  g^{(j)}}{u}}
  &\le \abs{\inner{\partial_z g_n^{(i)} (g_n^{(j)} - g^{(j)})}{u}}
  + \abs{\inner{(\partial_z g_n^{(i)} - \partial_z g^{(i)}) g^{(j)}}{u}}\\
  &\le \norm{g_n^{(j)}-g^{(j)}}\norm{u}\norm{\partial_z g_n^{(i)}}
  + \abs{\inner{\partial_z g_n^{(i)} - \partial_z g^{(i)}}{g^{(j)}u}}
  \to 0.
  \end{eqnarray*}
  Consequently the product $\partial_z g_n \cdot g_n^T$
  converges to $\partial_z g \cdot g^T$ with respect
  to the weak topology on $L^1(\Omega;\R^{k\times k})$.

  Now note that the set $\Sym_k^-$ of all symmetric and negative semi-definite
  $(k\times k)$-matrices is a closed and convex cone in $\R^{k\times k}$.
  Consequently also the set of all summable functions
  on $\Omega$ with values in $\Sym_k^-$ is a closed and convex
  cone in $L^1(\Omega;\R^{k\times k})$ and therefore,
  in particular, also weakly closed.
  Therefore the weak-weak continuity of the mapping $S$ implies that
  the set of functions $g \in H^1(\Omega;\R^k)$ satisfying the Slutsky
  condition $S(g) = S(g)^T \le 0$ is weakly closed.

  This shows that the set $\mathcal{X}$ is the intersection
  of the (weakly closed) set of 0-homogeneous, non-negative functions satisfying
  the budget constraint with a weakly closed set,
  which proves that $\mathcal{X}$ itself is weakly closed in $H^1(\Omega;\R^k)$.
\end{proof}

For the usage of the direct method in the calculus of variations,
we still have to prove the coercivity of the regularization functional.
In the case $\mu > 0$, the coercivity is obvious, as the regularization term
is equivalent to the $H^1$-norm;
in the case $\mu = 0$, however, the equivalence only holds, if the
operator $T$ does not annihilate constant functions
(see~\cite{AubVes97,Ves01} for a related result on total variation regularization).
In the next result, we provide a detailed proof of this assertion
by explicitly computing constants defining this equivalence of norms.
In particular, the results show that these constants depend
continuously on the operator $T$, which will be required in the proof
of the convergence result, where we also treat the case of operator errors.

\begin{lemma}\label{le:equiv_norm}
  Assume that
  $T\colon L^2(\Omega;\R^k)\to L^2(\Omega;\R^k)$
  is a bounded linear operator.
  If $\mu = 0$, assume in addition that
  $Tc \neq 0$ for every non-zero constant function $c \colon \Omega\to \R^k$.
  Define for $g \in H^1(\Omega;\R^k)$
  \begin{equation}\label{eq:normT}
  \norm{g}_T^2 := \norm{g}^2_\mu + \norm{Tg}^2_{L^2}.
  \end{equation}
  Then $\norm{\cdot}_T$ is a norm on $H^1(\Omega;\R^k)$ that
  is equivalent to the standard $H^1$-norm.
  More precisely, we have the following estimates:
  For every $\mu \ge 0$,
  \begin{equation}\label{eq:equiv_norm1}
  \norm{g}_T \le \norm{T}\norm{g}_{H^1};
  \end{equation}
  if $\mu > 0$ in~\eref{eq:normT}, then
  \begin{equation}\label{eq:equiv_norm2}
  \norm{g}_{H^1} \le \frac{1}{\min\{\sqrt{\mu},1\}}\norm{g}_T,
  \end{equation}
  and if $\mu = 0$ in~\eref{eq:normT}, then there exists a constant $A > 0$
  only depending on the set $\Omega$ such that
  \begin{equation}\label{eq:equiv_norm3}
  \norm{g}_{H^1} \le A(\norm{T}D(T)^{-1}+D(T)^{-1}+1)\norm{g}_T,
  \end{equation}
  where
  \[
  D(T) := \inf\set{\norm{Tc}}{c\colon \Omega\to \R^k \textrm{ is constant with } \abs{c}=1}.
  \]
\end{lemma}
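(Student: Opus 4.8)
The plan is to prove the three inequalities separately; \eref{eq:equiv_norm1} and \eref{eq:equiv_norm2} are elementary, and essentially all the content sits in the case $\mu = 0$. For the upper bound \eref{eq:equiv_norm1} I would estimate the two parts of $\norm{g}_T^2 = \norm{g}_\mu^2 + \norm{Tg}^2$ against $\norm{g}_{H^1}$, using boundedness $\norm{Tg} \le \norm{T}\,\norm{g} \le \norm{T}\,\norm{g}_{H^1}$ for the operator term and the definition of $\norm{\cdot}_\mu$ for the gradient and mass terms; this direction needs only the definitions and the boundedness of $T$. For \eref{eq:equiv_norm2}, when $\mu > 0$, I would drop the nonnegative term $\norm{Tg}^2$ and bound $\norm{g}_T^2 \ge \mu\norm{g}^2 + \norm{\nabla g}^2 \ge \min\{\mu,1\}\norm{g}_{H^1}^2$, which is exactly \eref{eq:equiv_norm2} after taking square roots. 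Note that each lower bound forces $\norm{g}_T = 0 \Rightarrow g = 0$; since $\norm{g}_T^2$ is a sum of squared (semi)norms it derives from an inner product, so $\norm{\cdot}_T$ is a genuine norm, and its equivalence with $\norm{\cdot}_{H^1}$ follows from the displayed estimates.

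The essential estimate is \eref{eq:equiv_norm3}, where $\norm{g}_T^2 = \norm{\nabla g}^2 + \norm{Tg}^2$ and the quantity to recover is the $L^2$-mass $\norm{g}$; since constants lie in the kernel of the gradient, the mass of the constant part of $g$ can only be seen through $T$. I would decompose $g = \bar g + (g - \bar g)$, where $\bar g \in \R^k$ is the mean of $g$ over $\Omega$. Because $\Omega$ is bounded, open, connected and Lipschitz, the Poincar\'e--Wirtinger inequality provides a constant $C_P = C_P(\Omega)$ with $\norm{g - \bar g} \le C_P\norm{\nabla g}$ (applied componentwise), controlling the oscillating part. For the constant part I would first check that $D(T) > 0$: the map $c \mapsto \norm{Tc}$ is continuous on the compact unit sphere $\set{c \in \R^k}{\abs{c} = 1}$ and, by the hypothesis $Tc \neq 0$ for $c \neq 0$, strictly positive, so its infimum is attained and positive. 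By homogeneity the defining property of $D(T)$ gives $\norm{T\bar g} \ge D(T)\norm{\bar g}$ (up to a factor $\abs{\Omega}^{-1/2}$ absorbed into $A$), and writing $T\bar g = Tg - T(g - \bar g)$ together with the boundedness of $T$ yields
\[
D(T)\norm{\bar g} \le \norm{Tg} + \norm{T}\,\norm{g - \bar g} \le \norm{Tg} + \norm{T}C_P\norm{\nabla g}.
\]
Adding $\norm{g} \le \norm{\bar g} + \norm{g - \bar g}$ and $\norm{\nabla g}$, and using $\norm{Tg} \le \norm{g}_T$ and $\norm{\nabla g} \le \norm{g}_T$, I would collect terms to obtain $\norm{g}_{H^1} \le (C_P\norm{T}D(T)^{-1} + D(T)^{-1} + C_P + 1)\norm{g}_T$, which is \eref{eq:equiv_norm3} with $A = C_P + 1$.

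The main obstacle is exactly the recovery of the constant component, reflected in the factor $\norm{T}D(T)^{-1}$: on the complement of the constants the Poincar\'e inequality does all the work with a purely geometric constant, but on the constants the regularization term $\norm{\nabla g}$ is blind, so the inversion must pass through $T$, and this is where the non-degeneracy assumption and the quantity $D(T)$ are indispensable. The two points requiring care, beyond routine bookkeeping, are the compactness argument guaranteeing $D(T) > 0$ (so that $D(T)^{-1}$ is finite and the equivalence is genuine) and the observation that $C_P$, and hence $A$, depends only on $\Omega$; this separation is precisely what later allows the $T$-dependence of the norm equivalence to be tracked explicitly through $\norm{T}$ and $D(T)$ once operator errors are treated.
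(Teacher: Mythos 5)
Your proposal is correct and takes essentially the same approach as the paper: in the only substantive case $\mu=0$, both arguments decompose $g$ into its mean value plus an oscillating part, control the oscillation by the Poincar\'e--Wirtinger inequality with a constant depending only on $\Omega$, and recover the constant part through $T$ via the lower bound $D(T)$ and the triangle inequality $\norm{T\bar g}\le\norm{Tg}+\norm{T}\norm{g-\bar g}$. The remaining differences are cosmetic---you work with norms where the paper uses squared norms and the orthogonality of the mean-value projection, and you spell out the finite-dimensional compactness argument for $D(T)>0$ that the paper merely asserts.
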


\begin{proof}
  Inequality~\eref{eq:equiv_norm1} follows from
  \[
  \norm{g}_T \le \norm{Tg}_{L^2} \le \norm{T}\norm{g}_{L^2} \le \norm{T}\norm{g}_{H^1},
  \]
  and~\eref{eq:equiv_norm2} is trivial.

  Now assume that $\mu = 0$.
  Then the assertion $Tc \neq 0$ for every non-zero constant function $c\colon \Omega\to \R^k$
  implies that $0 < D(T) < +\infty$.
  Define now the projection
  $P\colon L^2(\Omega;\R^k) \to L^2(\Omega;\R^k)$,
  $g\mapsto \frac{1}{\abs{\Omega}} \int_\Omega g$.
  Then
  \begin{eqnarray*}
    \norm{g}^2 &= \norm{g-Pg}^2 + \norm{Pg}^2\\
    &\le \norm{g-Pg}^2 + D(T)^{-2}\norm{TPg}^2\\
    &\le \norm{g-Pg}^2 + 2D(T)^{-2}\bigl(\norm{Tg}^2+\norm{T(g-Pg)}^2\bigr)\\
    &\le (1+2D(T)^{-2}\norm{T}^2)\norm{g-Pg}^2+2D(T)^{-2}\norm{Tg}^2.
  \end{eqnarray*}
  From the Poincar\'e Inequality (see e.g.~\cite[Thm.~4.8.1]{Zie89})
  it follows that there exists $C > 0$ such that
  $\norm{g-Pg}\le C\norm{\nabla g}$.
  Thus
  \begin{eqnarray*}
  \norm{g}_{H^1}^2 &= \norm{g}^2 + \norm{\nabla g}^2\\
  &\le \bigl(C^2(1+2D(T)^{-2}\norm{T}^2)+1\bigr)\norm{\nabla g}^2 + 2D(T)^{-2}\norm{Tg}^2.
  \end{eqnarray*}
  Setting $A = 2(C+1)$ we obtain~\eref{eq:equiv_norm3}.
\end{proof}

\begin{lemma}
  Assume that $T^\delta\colon L^2(\Omega;\R^k) \to L^2(\Omega;\R^k)$
  is bounded linear, $h^\gamma \in L^2(\Omega;\R^k)$, $\alpha > 0$,
  and $\mu \ge 0$.
  If $\mu = 0$, assume in addition that
  $T^\delta c \neq 0$ for every non-zero constant function $c\colon \Omega \to \R^k$.
  Then the regularization functional $\mathcal{T}_\alpha\colon L^2(\Omega;\R^k) \to \R_{\ge 0}\cup \{+\infty\}$.
  \[
  \mathcal{T}_\alpha(g;T^\delta,h^\gamma) :=
  \cases{\norm{T^\delta g - h^\gamma}^2+\alpha\norm{g}_\mu^2
    & if $g \in \mathcal{X}$,\\
    +\infty & else,}
  \]
  attains its minimum.
\end{lemma}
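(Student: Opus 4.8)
The plan is to apply the \emph{direct method in the calculus of variations}, assembling the three standard ingredients — nonnegativity, coercivity, and weak lower semicontinuity — together with the weak closedness of $\mathcal{X}$ established in the first lemma. Since the demand function we wish to recover lies in $\mathcal{X}$, the set is nonempty, so the functional is not identically $+\infty$; it is bounded below by $0$, and I may fix a minimizing sequence $(g_n)_{n\in\N}\subset\mathcal{X}$ with $\mathcal{T}_\alpha(g_n;T^\delta,h^\gamma)\to m:=\inf\mathcal{T}_\alpha<+\infty$.

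First I would show that $(g_n)$ is bounded in $H^1(\Omega;\R^k)$. As $\mathcal{T}_\alpha(g_n)$ is bounded, both summands $\norm{T^\delta g_n-h^\gamma}^2$ and $\alpha\norm{g_n}_\mu^2$ are bounded; the first yields a bound on $\norm{T^\delta g_n}$ by the triangle inequality, and together these control the quantity $\norm{g_n}_\mu^2+\norm{T^\delta g_n}^2$, which is precisely the squared norm from \eref{eq:normT} associated with $T^\delta$. Applying Lemma~\ref{le:equiv_norm} to $T^\delta$ — via \eref{eq:equiv_norm2} when $\mu>0$, and via \eref{eq:equiv_norm3} together with the nondegeneracy hypothesis $T^\delta c\neq0$ when $\mu=0$ — then converts this into a uniform bound on $\norm{g_n}_{H^1}$. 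This coercivity step is the one place where the hypothesis for $\mu=0$ is indispensable: constant functions lie in the kernel of the seminorm $\norm{\nabla\cdot}$, so without the control on constants a minimizing sequence could drift off to infinity along the constants while keeping the fidelity term bounded.

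Because $H^1(\Omega;\R^k)$ is a Hilbert space, hence reflexive, the bounded sequence $(g_n)$ admits a subsequence converging weakly to some $g^\ast\in H^1(\Omega;\R^k)$. By the weak closedness lemma proved above, $\mathcal{X}$ is weakly sequentially closed, so $g^\ast\in\mathcal{X}$. It remains to check weak lower semicontinuity of $\mathcal{T}_\alpha$ along this subsequence. For the regularization term, $\norm{\cdot}_\mu$ is a continuous convex seminorm on $H^1$ and is therefore weakly lower semicontinuous. For the fidelity term, the bounded linear operator $T^\delta$ is weak--weak continuous, so $T^\delta g_n-h^\gamma\rightharpoonup T^\delta g^\ast-h^\gamma$ weakly in $L^2$, and weak lower semicontinuity of the $L^2$-norm gives $\norm{T^\delta g^\ast-h^\gamma}^2\le\liminf_n\norm{T^\delta g_n-h^\gamma}^2$.

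Adding the two estimates yields $\mathcal{T}_\alpha(g^\ast)\le\liminf_n\mathcal{T}_\alpha(g_n)=m$, while $g^\ast\in\mathcal{X}$ forces $\mathcal{T}_\alpha(g^\ast)\ge m$; hence $g^\ast$ attains the minimum. Note that the two genuinely substantial points — the weak closedness of the \emph{non-convex} set $\mathcal{X}$ and the coercivity in the degenerate case $\mu=0$ — have already been handled in the two preceding lemmas, so the remaining work is the routine assembly described above, with the coercivity reduction to Lemma~\ref{le:equiv_norm} being the step requiring the most care.
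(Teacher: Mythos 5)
Your proposal is correct and follows exactly the paper's route: the paper's own proof is a three-line invocation of the direct method, citing the weak sequential closedness of $\mathcal{X}$ from the first lemma, weak lower semicontinuity of the fidelity and regularization terms, and weak coercivity via the norm-equivalence lemma (applied to $T^\delta$, which is where the hypothesis $T^\delta c \neq 0$ enters for $\mu = 0$). You have simply written out the standard details (minimizing sequence, reflexivity, subsequence extraction) that the paper leaves implicit, so there is no substantive difference.
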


\begin{proof}
  The weak closedness of the set $\mathcal{X}$
  and the weak lower semi-continuity of the mapping
  $g \mapsto \frac{1}{2}\norm{T^\delta g-h^\gamma}^2 + \frac{\alpha}{2}\norm{g}_\mu^2$
  on the space $H^1(\Omega)$ imply that also the mapping
  $\mathcal{T}_\alpha(\cdot;T^\delta,h^\gamma)$ is weakly lower semi-continuous.
  Moreover, Lemma~\ref{le:equiv_norm} implies that
  $\mathcal{T}_\alpha(\cdot;T^\delta,h^\gamma)$ is weakly coercive.
  Applying the direct method in the calculus of variations,
  we obtain the existence of a minimizer.
\end{proof}

Note that the previous result does not say
anything about the uniqueness of the minimizer.
Because of the non-convexity of the set $\mathcal{X}$,
it is probable that the Tikhonov functional
has multiple local minima, but also possible
that it has several global minima.

The following result is very similar to the convergence result
in~\cite{NeuSch90}. The main difference is that we also
consider the homogeneous Sobolev semi-norm as a regularization term,
which is not coercive by itself.
The coercivity (or rather the equi-coercivity of the functionals
$\mathcal{T}_\alpha(\cdot;T^\delta,h^\gamma)/\alpha$)
is only obtained by means of Lemma~\ref{le:equiv_norm}.

\begin{proposition}\label{pr:wellposed}
  Assume that $T\colon L^2(\Omega;\R^k) \to L^2(\Omega;\R^k)$ is bounded
  linear satisfying $Tc \neq 0$ for every non-zero constant function $c\colon\Omega\to\R^k$
  and that the operator equation $Tg = h$ has a solution in $\mathcal{X}$.
  Let $\delta_j \to 0$, $\gamma_j \to 0$ and assume that
  $T^{\delta_j}\colon L^2(\Omega;\R^k) \to L^2(\Omega;\R^k)$ are bounded linear operators
  satisfying $\norm{T^{\delta_j}-T} \le \delta_j$
  and that the functions $h^{\gamma_j} \in L^2(\Omega;\R^k)$ satisfy $\norm{h^{\gamma_j}-h} \le \gamma_j$.
  Let $\mu \ge 0$ be fixed; if $\mu = 0$, assume in addition that
  $T^{\delta_j}c \neq 0$ for every non-zero constant function $c\colon\Omega\to\R^k$.

  Assume that $\alpha_j > 0$ is chosen such that $\alpha_j \to 0$
  and $(\delta_j+\gamma_j)^2/\alpha_j \to 0$.
  Then every sequence $(g_j)_{j\in\N} \subset \mathcal{X}$ satisfying
  \[
  g_j \in \argmin\set{\mathcal{T}_{\alpha_j}(g;T^{\delta_j},h^{\gamma_j})}{g \in \mathcal{X}}
  \]
  has a subsequence $g_{j^{(i)}}$ converging with respect
  to the $H^1$-norm to some
  \[
  g^\dagger \in \argmin\set{\norm{g}_\mu^2}{Tg = h,\ g\in \mathcal{X}}.
  \]
\end{proposition}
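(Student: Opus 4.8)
The plan is to follow the classical convergence analysis for Tikhonov regularization (as in \cite{NeuSch90}), the twist being that coercivity must be extracted from Lemma~\ref{le:equiv_norm} rather than from the regularization term. First fix a $\norm{\cdot}_\mu$-minimizing solution $g^\dagger$, i.e.\ $g^\dagger \in \mathcal{X}$ with $Tg^\dagger = h$ and $\norm{g^\dagger}_\mu$ minimal among all such solutions; its existence follows from the direct method, since the solution set is weakly closed (it is the intersection of $\mathcal{X}$ with the weakly closed affine set $Tg=h$) and on it $\norm{g}_T^2 = \norm{g}_\mu^2 + \norm{h}^2$, so that minimizing $\norm{g}_\mu$ is coercive in $H^1$ by Lemma~\ref{le:equiv_norm}. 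Then I would exploit minimality of $g_j$: from
\[
\norm{T^{\delta_j}g_j - h^{\gamma_j}}^2 + \alpha_j\norm{g_j}_\mu^2 \le \norm{T^{\delta_j}g^\dagger - h^{\gamma_j}}^2 + \alpha_j\norm{g^\dagger}_\mu^2,
\]
together with $\norm{T^{\delta_j}g^\dagger - h^{\gamma_j}} \le \delta_j\norm{g^\dagger}_{L^2} + \gamma_j$ (as $Tg^\dagger = h$), dividing by $\alpha_j$ and using $(\delta_j+\gamma_j)^2/\alpha_j \to 0$ yields both $\limsup_j \norm{g_j}_\mu^2 \le \norm{g^\dagger}_\mu^2$ and $\norm{T^{\delta_j}g_j - h^{\gamma_j}} \to 0$.

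The next step converts the $\norm{\cdot}_\mu$-bound into an $H^1$-bound. For $\mu>0$ this is immediate from \eref{eq:equiv_norm2}; for $\mu=0$ I would apply \eref{eq:equiv_norm3} to $T^{\delta_j}$, bounding $\norm{g_j}_{H^1}$ by a multiple of $\norm{g_j}_{T^{\delta_j}}^2 = \norm{g_j}_\mu^2 + \norm{T^{\delta_j}g_j}^2$, where $\norm{T^{\delta_j}g_j} \le \norm{T^{\delta_j}g_j - h^{\gamma_j}} + \norm{h^{\gamma_j}}$ is bounded. The crucial point — the reason Lemma~\ref{le:equiv_norm} was stated with explicit constants — is that the prefactor $A(\norm{T^{\delta_j}}D(T^{\delta_j})^{-1} + D(T^{\delta_j})^{-1} + 1)$ must stay bounded along the sequence. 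Since $\norm{T^{\delta_j}} \le \norm{T} + \delta_j$ and, for a unit constant $c$, $\norm{T^{\delta_j}c} \ge \norm{Tc} - \delta_j\abs{\Omega}^{1/2}$, one gets $D(T^{\delta_j}) \ge D(T) - \delta_j\abs{\Omega}^{1/2} \ge D(T)/2 > 0$ for $j$ large, so the prefactors are uniformly bounded. Hence $(g_j)$ is bounded in $H^1$, a subsequence $g_{j^{(i)}} \rightharpoonup \bar g$ converges weakly, and by the first lemma $\bar g \in \mathcal{X}$.

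I would then identify $\bar g$ as a $\norm{\cdot}_\mu$-minimizing solution. The Rellich--Kondrachov theorem gives $g_{j^{(i)}} \to \bar g$ strongly in $L^2$, hence $Tg_{j^{(i)}} \to T\bar g$; combining this with $\norm{(T^{\delta_{j^{(i)}}}-T)g_{j^{(i)}}} \le \delta_{j^{(i)}}\norm{g_{j^{(i)}}}_{L^2} \to 0$, with $h^{\gamma_{j^{(i)}}} \to h$, and with the residual bound $\norm{T^{\delta_{j^{(i)}}}g_{j^{(i)}} - h^{\gamma_{j^{(i)}}}} \to 0$ from the first step, a triangle inequality yields $T\bar g = h$. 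Weak lower semicontinuity of the convex continuous seminorm $\norm{\cdot}_\mu$ then gives $\norm{\bar g}_\mu^2 \le \liminf_i \norm{g_{j^{(i)}}}_\mu^2 \le \norm{g^\dagger}_\mu^2$, and since $\bar g$ is itself a solution in $\mathcal{X}$ while $g^\dagger$ is minimal, equality must hold, so $\bar g$ is a minimizing solution.

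Finally, to upgrade weak to strong $H^1$-convergence I would use the Radon--Riesz property. The chain of inequalities above forces $\norm{g_{j^{(i)}}}_\mu \to \norm{\bar g}_\mu$. For $\mu>0$, $\norm{\cdot}_\mu$ is a Hilbert norm equivalent to $\norm{\cdot}_{H^1}$, so weak convergence plus norm convergence already gives strong convergence in $\norm{\cdot}_\mu$ and hence in $H^1$. For $\mu=0$ one argues on the gradient: $\nabla g_{j^{(i)}} \rightharpoonup \nabla\bar g$ weakly in $L^2$ with $\norm{\nabla g_{j^{(i)}}} \to \norm{\nabla\bar g}$ gives $\nabla g_{j^{(i)}} \to \nabla\bar g$ strongly in $L^2$, which together with the strong $L^2$-convergence of $g_{j^{(i)}}$ from Rellich yields strong $H^1$-convergence. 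The main obstacle throughout is precisely the $\mu=0$ case: the regularization term controls only the gradient, so both the a priori $H^1$-boundedness and the final strong convergence have to be bootstrapped — the former through the \emph{uniform} norm-equivalence constants of Lemma~\ref{le:equiv_norm}, the latter by treating the compact ($L^2$) and the Radon--Riesz ($\nabla$) parts separately.
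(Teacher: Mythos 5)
Your proof is correct and follows essentially the same route as the paper: the same optimality comparison against a solution of $Tg=h$ in $\mathcal{X}$, the same use of Lemma~\ref{le:equiv_norm} with uniformly controlled equivalence constants (via $\norm{T^{\delta_j}}\le\norm{T}+\delta_j$ and $D(T^{\delta_j})\ge D(T)-\delta_j\abs{\Omega}^{1/2}$) to obtain $H^1$-boundedness in the critical case $\mu=0$, followed by the standard weak-compactness, weak-closedness, lower-semicontinuity and Radon--Riesz argument. The only difference is one of completeness: the paper delegates the subsequential-convergence step to~\cite[Thm.~3.26]{SchGraGroHalLen09}, whereas you carry it out explicitly, including the existence of $g^\dagger$, which the paper leaves implicit.
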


\begin{proof}
  Let $\tilde{g}$ be any solution of $Tg=h$ in $\mathcal{X}$.
  Then
  \begin{eqnarray*}
    \norm{T^{\delta_j}g_j-h^{\gamma_j}}^2 + \alpha\norm{g_j}_\mu^2
    &\le \norm{T^{\delta_j}\tilde{g}-h^{\gamma_j}}^2+\alpha\norm{\tilde{g}}_\mu^2\\
    &\le \bigl(\norm{T^{\delta_j}-T}\norm{\tilde{g}}+\norm{h-h^{\gamma_j}}\bigr)^2 + \alpha\norm{\tilde{g}}_\mu^2\\
    &\le (\delta_j\norm{\tilde{g}}+\gamma_j)^2 + \alpha\norm{\tilde{g}}_\mu^2.
  \end{eqnarray*}
  Consequently,
  \begin{eqnarray*}
  \norm{g_j}_{T^{\delta_j}}^2
  &\le 2\norm{Tg_j-h^{\gamma_j}}^2 + 2\norm{h^{\gamma_j}}^2
  + \norm{g_j}_\mu^2\\
  &\le (\delta_j\norm{\tilde{g}}+\gamma_j)^2 + \alpha\norm{\tilde{g}}_\mu^2 + 2\norm{h^{\gamma_j}}^2
  + \frac{(\delta_j\norm{\tilde{g}}^2+\gamma_j)^2}{\alpha} + \norm{\tilde{g}}_\mu^2.
  \end{eqnarray*}
  From Lemma~\ref{le:equiv_norm},
  it follows that $\norm{g_j}_{T^{\delta_j}} \ge C(T^{\delta_j})\norm{g_j}_{H^1}$
  for some constants $C(\cdot) > 0$ depending continuously
  on the operator $T^{\delta_j}$.
  Therefore, the assumption $(\delta_j+\gamma_j)^2/\alpha \to 0$
  implies that the sequence $(g_j)_{j\in\N}$ is bounded.
  The proof of the subsequential convergence is now
  along the lines of~\cite[Thm.~3.26]{SchGraGroHalLen09}.
\end{proof}

\section{Convergence Rates}

\begin{lemma}\label{le:var_ineq}
  Assume that $T\colon L^2(\Omega;\R^k) \to L^2(\Omega;\R^k)$ is bounded
  linear and that the equation $Tg=h$ has a solution in $\mathcal{X}$.
  Let
  \[
  g^\dagger \in \argmin\set{\norm{g}_\mu^2}{Tg = h,\ g\in \mathcal{X}}.
  \]
  Let moreover $T^\delta\colon L^2(\Omega;\R^k) \to L^2(\Omega;\R^k)$
  satisfy $\norm{T^\delta-T}_{L^2} \le \delta$,
  and let $h^\gamma \in L^2(\Omega;\R^k)$ satisfy $\norm{h^\gamma-h} \le \gamma$.
  If $\mu = 0$, assume in addition that $\delta < \norm{T}$ and
  $Tc \neq 0$ for every non-zero constant function $c\colon\Omega\to\R^k$.
  Assume that there exists a set $L \subset \mathcal{X}$ such that,
  for some $\beta > 0$, $C \ge 0$ and every $g \in L$, we have
  \begin{equation}\label{eq:varineq}
    \beta\norm{g-g^\dagger}_\mu^2 \le \norm{g}_\mu^2
    - \norm{g^\dagger}_\mu^2 + C\norm{T(g-g^\dagger)}.
  \end{equation}
  Let moreover
  \[
  g_\alpha^{\delta,\gamma} \in \argmin\set{\mathcal{T}_\alpha(g;T^\delta,h^\gamma)}{g \in \mathcal{X}}.
  \]
  Define for $\mu > 0$
  \[
  D_\mu(\alpha,\delta,\gamma) := \frac{\delta\norm{g^\dagger}+\gamma}{\sqrt{\mu\alpha}} + \frac{\norm{g^\dagger}_\mu}{\sqrt{\mu}},
  \]
  and let
  \[
  D_0(\alpha,\delta,\gamma) := A\frac{\norm{T}+D(T)+1}{D(T)-\delta}
  \biggl[\norm{h}+\gamma + \sqrt{2}\frac{\delta\norm{g^\dagger}+\gamma + \sqrt{\alpha}\norm{\nabla g^\dagger}}{\min\{\sqrt{\alpha},1\}}\biggr]
  \]
  with $A > 0$ and $D(T) > 0$ as in Lemma~\ref{le:equiv_norm}.

  Then the estimates
  \[
  \beta\norm{g_\alpha^{\delta,\gamma} - g^\dagger}_\mu^2 \le \frac{(\gamma+\delta\norm{g^\dagger})^2}{\alpha}
  + C\bigl(\gamma+\delta D_\mu(\alpha,\delta,\gamma)\bigr) + \frac{C^2\alpha}{4}
  \]
  and
  \[
  \norm{T(g_\alpha^{\delta,\gamma}-g^\dagger)}^2 \le 2(\gamma+\delta\norm{g^\dagger})^2
  + 2\alpha C\bigl(\gamma+\delta D_\mu(\alpha,\delta,\gamma)\bigr) + C^2\alpha^2
  \]
  hold whenever $g_\alpha^{\delta,\gamma} \in L$.
\end{lemma}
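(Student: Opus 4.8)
Throughout write $g_\alpha := g_\alpha^{\delta,\gamma}$. The plan is to derive both estimates from a single \emph{master inequality} obtained by combining the minimality of $g_\alpha$ with the variational inequality~\eref{eq:varineq}. First I would use that $g^\dagger \in \mathcal{X}$ is a competitor for the minimization, so that $\mathcal{T}_\alpha(g_\alpha;T^\delta,h^\gamma) \le \mathcal{T}_\alpha(g^\dagger;T^\delta,h^\gamma)$. Since $Tg^\dagger = h$, the triangle inequality together with $\norm{T^\delta-T}\le\delta$ and $\norm{h^\gamma-h}\le\gamma$ gives $\norm{T^\delta g^\dagger - h^\gamma}\le \delta\norm{g^\dagger}+\gamma$, and hence the energy inequality
\[
\norm{T^\delta g_\alpha - h^\gamma}^2 + \alpha\bigl(\norm{g_\alpha}_\mu^2 - \norm{g^\dagger}_\mu^2\bigr) \le (\delta\norm{g^\dagger}+\gamma)^2.
\]

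Next, as $g_\alpha \in L$ by hypothesis, I would bound the bracket from below using~\eref{eq:varineq} in the form $\norm{g_\alpha}_\mu^2 - \norm{g^\dagger}_\mu^2 \ge \beta\norm{g_\alpha-g^\dagger}_\mu^2 - C\norm{T(g_\alpha-g^\dagger)}$. Substituting this produces the master inequality
\[
\norm{T^\delta g_\alpha - h^\gamma}^2 + \alpha\beta\norm{g_\alpha-g^\dagger}_\mu^2 \le (\delta\norm{g^\dagger}+\gamma)^2 + \alpha C\norm{T(g_\alpha-g^\dagger)}.
\]
Because~\eref{eq:varineq} is formulated in terms of the exact $T$ whereas the fidelity term involves $T^\delta$, I then bridge the two residuals: using $Tg^\dagger=h$ once more, $\norm{T(g_\alpha-g^\dagger)} = \norm{Tg_\alpha-h} \le r + \delta\norm{g_\alpha}+\gamma$, where $r := \norm{T^\delta g_\alpha - h^\gamma}$.

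The two assertions now follow by two elementary manipulations. For the first, I would insert the bridge into the right-hand side, collect the term $\alpha C r$ with $r^2$ on the left, and use $r^2 - \alpha C r \ge -\alpha^2 C^2/4$ to discard the residual; dividing by $\alpha$ leaves $\beta\norm{g_\alpha-g^\dagger}_\mu^2 \le (\delta\norm{g^\dagger}+\gamma)^2/\alpha + C(\delta\norm{g_\alpha}+\gamma) + \alpha C^2/4$, which is the first estimate once $\norm{g_\alpha}$ is replaced by its a priori bound. For the residual estimate I would instead drop the non-negative term $\alpha\beta\norm{g_\alpha-g^\dagger}_\mu^2$, insert the bridge, and apply Young's inequality $\alpha C r \le \frac{1}{2}r^2 + \frac{1}{2}\alpha^2C^2$; absorbing $\frac{1}{2}r^2$ on the left gives $r^2 \le 2(\delta\norm{g^\dagger}+\gamma)^2 + 2\alpha C(\delta\norm{g_\alpha}+\gamma) + \alpha^2C^2$, which is exactly the asserted factor-$2$ bound for the perturbed residual and, through the bridge, controls $\norm{T(g_\alpha-g^\dagger)}$.

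The one genuinely delicate ingredient is the a priori bound on $\norm{g_\alpha}$ that upgrades $\delta\norm{g_\alpha}$ to $\delta D_\mu$, respectively $\delta D_0$. For $\mu>0$ it is immediate: the energy inequality yields $\norm{g_\alpha}_\mu^2 \le \norm{g^\dagger}_\mu^2 + (\delta\norm{g^\dagger}+\gamma)^2/\alpha$, so $\norm{g_\alpha}\le\norm{g_\alpha}_\mu/\sqrt{\mu}\le D_\mu$ after $\sqrt{a^2+b^2}\le a+b$. For $\mu=0$ the quantity $\norm{\cdot}_\mu$ is only a seminorm, and I would instead invoke Lemma~\ref{le:equiv_norm} for the operator $T^\delta$; here $\norm{T^\delta}\le\norm{T}+\delta$ and $D(T^\delta)\ge D(T)-\delta>0$ make the operator-dependent prefactor collapse to that of $D_0$, while the bracketed factor is the bound on $\norm{g_\alpha}_{T^\delta}$ obtained from $\norm{T^\delta g_\alpha}\le \norm{h}+\gamma+r$ and the energy bound on $\norm{\nabla g_\alpha}$. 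I expect this coercivity step for $\mu=0$, together with the bookkeeping of the operator error through $D(T^\delta)\ge D(T)-\delta$, to be the only real obstacle; everything else is completing the square and Young's inequality.
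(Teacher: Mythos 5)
Your proof is correct and takes essentially the same route as the paper's own: minimality against the competitor $g^\dagger$, the variational inequality~\eref{eq:varineq}, the residual bridge $\norm{T(g_\alpha^{\delta,\gamma}-g^\dagger)}\le\norm{T^\delta g_\alpha^{\delta,\gamma}-h^\gamma}+\delta\norm{g_\alpha^{\delta,\gamma}}+\gamma$, completing the square (your Young-inequality step is exactly the paper's $\sup_{t\ge 0}\bigl[Ct-t^2/(2\alpha)\bigr]$ estimate), and the identical a priori bounds $D_\mu$ (via $\norm{g_\alpha^{\delta,\gamma}}\le\norm{g_\alpha^{\delta,\gamma}}_\mu/\sqrt{\mu}$ and optimality) and $D_0$ (via Lemma~\ref{le:equiv_norm} applied to $T^\delta$, using $\norm{T^\delta}\le\norm{T}+\delta$ and $D(T^\delta)\ge D(T)-\delta$). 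Even the small looseness you acknowledge---deriving the factor-$2$ bound for the perturbed residual $\norm{T^\delta g_\alpha^{\delta,\gamma}-h^\gamma}^2$ and then transferring it to $\norm{T(g_\alpha^{\delta,\gamma}-g^\dagger)}^2$ through the bridge---is present in the paper's own proof in exactly the same way.
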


\begin{proof}
  The inequality~\eref{eq:varineq} and the optimality of $g_\alpha^{\delta,\gamma}$ imply that
  \begin{eqnarray*}
    \beta\norm{g_\alpha^{\delta,\gamma}-g^\dagger}_\mu^2
    & \le \norm{g_\alpha^{\delta,\gamma}}_\mu^2 - \norm{g^\dagger}_\mu^2
    + C\norm{T(g_\alpha^{\delta,\gamma}-g^\dagger)}\\
    & \le \frac{1}{\alpha}\Bigl(\norm{T^\delta g^\dagger-h^\gamma}^2-\norm{T^\delta g_\alpha^{\delta,\gamma}-h^\gamma}^2\Bigr)
    + C\norm{T(g_\alpha^{\delta,\gamma}-g^\dagger)}\\
    & \le \frac{(\delta\norm{g^\dagger}_{L^2}+\gamma)^2}{\alpha}+C\bigl(\delta\norm{g_\alpha^{\delta,\gamma}}_{L^2}+\gamma\bigr) \\
    &\qquad\qquad + C\norm{T^\delta g_\alpha^{\delta,\gamma}-h^\gamma}
    - \frac{\norm{T^\delta g_\alpha^{\delta,\gamma}-h^\gamma}^2}{\alpha}.
  \end{eqnarray*}
  Estimating
  \[
  C\norm{T^\delta g_\alpha^{\delta,\gamma}-h^\gamma} - \frac{\norm{T^\delta g_\alpha^{\delta,\gamma}-h^\gamma}^2}{\alpha}
  \le \sup_{t\ge 0} \Bigl[Ct-\frac{t^2}{\alpha}\Bigr] = \frac{C^2\alpha}{4},
  \]
  we obtain the inequality
  \begin{equation}\label{eq:rate_1}
    \beta\norm{g_\alpha^{\delta,\gamma} - g^\dagger}_\mu^2 \le \frac{(\gamma+\delta\norm{g^\dagger}_{L^2})^2}{\alpha}
    + C\bigl(\gamma+\delta\norm{g_\alpha^{\delta,\gamma}}\bigr) + \frac{C^2\alpha}{4}.
  \end{equation}
  Moreover, using the estimate
  \begin{eqnarray*}
    C\norm{T^\delta g_\alpha^{\delta,\gamma}-h^\gamma} - \frac{\norm{T^\delta g_\alpha^{\delta,\gamma}-h^\gamma}^2}{\alpha}\\
    \le \sup_{t\ge 0} \Bigl[Ct-\frac{t^2}{2\alpha}\Bigr] - \frac{\norm{T^\delta g_\alpha^{\delta,\gamma}-h^\gamma}^2}{2\alpha}
    = \frac{C^2\alpha}{2}-\frac{\norm{T^\delta g_\alpha^{\delta,\gamma}-h^\gamma}^2}{2\alpha},
  \end{eqnarray*}
  we obtain
  \begin{equation}\label{eq:rate_2}
    \norm{T(g_\alpha^{\delta,\gamma}-g^\dagger)}^2 \le 2(\gamma+\delta\norm{g^\dagger}_{L^2})^2
    + 2\alpha C\bigl(\gamma+\delta\norm{g_\alpha^{\delta,\gamma}}\bigr) + C^2\alpha^2.
  \end{equation}

  Assume first that $\mu > 0$.
  Then the definition of $\norm{\cdot}_\mu$
  and the optimality of $g_\alpha^{\delta}$ imply the estimate
  \[
  \fl
  \norm{g_\alpha^{\delta,\gamma}} \le \frac{1}{\sqrt{\mu}}\norm{g_\alpha^{\delta,\gamma}}_\mu
  \le \frac{1}{\sqrt{\mu}}\Bigl(\frac{(\delta\norm{g^\dagger}+\gamma)^2}{\alpha} + \norm{g^\dagger}_\mu^2\Bigr)^{1/2}
  \le \frac{1}{\sqrt{\mu}}\Bigl(\frac{\delta\norm{g^\dagger}+\gamma}{\sqrt{\alpha}} + \norm{g^\dagger}_\mu\Bigr),
  \]
  which proves the assertion for $\mu$ strictly positive.
  \medskip

  Now assume that $\mu = 0$.
  Then Lemma~\ref{le:equiv_norm} implies that,
  using the same notation as in the Lemma,
  \[
  \norm{g_\alpha^{\delta,\gamma}} \le
  A(\norm{T^\delta}D(T^\delta)^{-1}+D(T^\delta)^{-1}+1)\norm{g_\alpha^{\delta,\gamma}}_{T^\delta}.
  \]
  Moreover, for $\norm{T} > \delta$, we have
  \begin{eqnarray*}
  D(T^\delta)
  &= \inf\set{\norm{T^\delta c}}{c\colon\Omega\to\R^k \textrm{ is constant with }\abs{c}=1}\\
  &\ge \inf\set{\norm{Tc}}{c\colon\Omega\to\R^k \textrm{ is constant with }\abs{c}=1}-\delta\\
  &= D(T)-\delta.
  \end{eqnarray*}
  Therefore,
  \begin{eqnarray*}
    \norm{g_\alpha^{\delta,\gamma}} \le
    A((\norm{T}+\delta)(D(T)-\delta)^{-1}+(D(T)-\delta)^{-1}+1)\norm{g_\alpha^{\delta,\gamma}}_{T^\delta}\\
    = A\frac{\norm{T}+D(T)+1}{D(T)-\delta}\norm{g_\alpha^{\delta,\gamma}}_{T^\delta}.
  \end{eqnarray*}
  Now the optimality of $g_\alpha^{\delta,\gamma}$ implies that
  \begin{eqnarray*}
    \norm{g_\alpha^{\delta,\gamma}}_{T^\delta}
    &\le \norm{T^\delta g_\alpha^{\delta,\gamma}-h^\gamma}+\norm{\nabla g_\alpha^{\delta,\gamma}} + \norm{h^\gamma}\\
    &\le \norm{h}+\gamma + \frac{\bigl(2\mathcal{T}_\alpha(g_\alpha^{\delta,\gamma};T^\delta,h^\gamma)\bigr)^{1/2}}{\min\{\sqrt{\alpha},1\}}\\
    &\le \norm{h}+\gamma + \frac{\bigl(2\mathcal{T}_\alpha(g^\dagger;T^\delta,h^\gamma)\bigr)^{1/2}}{\min\{\sqrt{\alpha},1\}}\\
    &\le \norm{h}+\gamma + \sqrt{2}\frac{\delta\norm{g^\dagger}+\gamma + \sqrt{\alpha}\norm{\nabla g^\dagger}}{\min\{\sqrt{\alpha},1\}}.
  \end{eqnarray*}
  Together, these estimates show that
  \[
  \norm{g_\alpha^{\delta,\gamma}}
  \le A\frac{\norm{T}+D(T)+1}{D(T)-\delta}
  \biggl[\norm{h}+\gamma + \sqrt{2}\frac{\delta\norm{g^\dagger}+\gamma
      + \sqrt{\alpha}\norm{\nabla g^\dagger}}{\min\{\sqrt{\alpha},1\}}\biggr].
  \]
  Inserting this inequality in~\eref{eq:rate_1} and~\eref{eq:rate_2}
  proves the assertion for $\mu = 0$.
\end{proof}

In the next result, we will present concrete conditions
that imply the inequality~\eref{eq:varineq}.
These conditions are a generalization of \emph{projected source conditions},
which are a classical concept in the theory of inverse problems
with convex contraints (see~\cite{ChaKun94a,Eic92,Neu87}), to a non-convex setting.
Recently, the relation between projected source conditions
and variational inequalities of the type~\eref{eq:varineq}
has also been studied in~\cite{FleHof11}, though still in a convex setting.
In order to generalize this concept to non-convex constraints,
we recall the notion of a proximal normal cone to
a subset of a Hilbert space (see~\cite{ClaLedSteWol98}).

\begin{definition}
  Let $Y$ be a Hilbert space and let $S \subset Y$ be non-empty.
  We define for $y \in Y$ the set $\proj_S(y) \subset S$
  as the set of all points $z \in S$ for which the distance
  to $y$ is minimal.
  Moreover we define for $z \in S$ the \emph{proximal normal cone} $N_S^P(z)$ to $S$
  at $z$ as
  \[
  N_S^P(z) := \set{\zeta = t(y-z) \in Y}{t\ge 0,\ z \in \proj_S(y)}.
  \]
  See also Figure~\ref{fig:prox_normal_cone}.
\end{definition}

\begin{figure}
  \psfrag{X}{$S$}
  \psfrag{NX}{$N_S^P(z)$}
  \psfrag{g}{$z$}
  \centering
  \includegraphics[width=0.4\textwidth]{./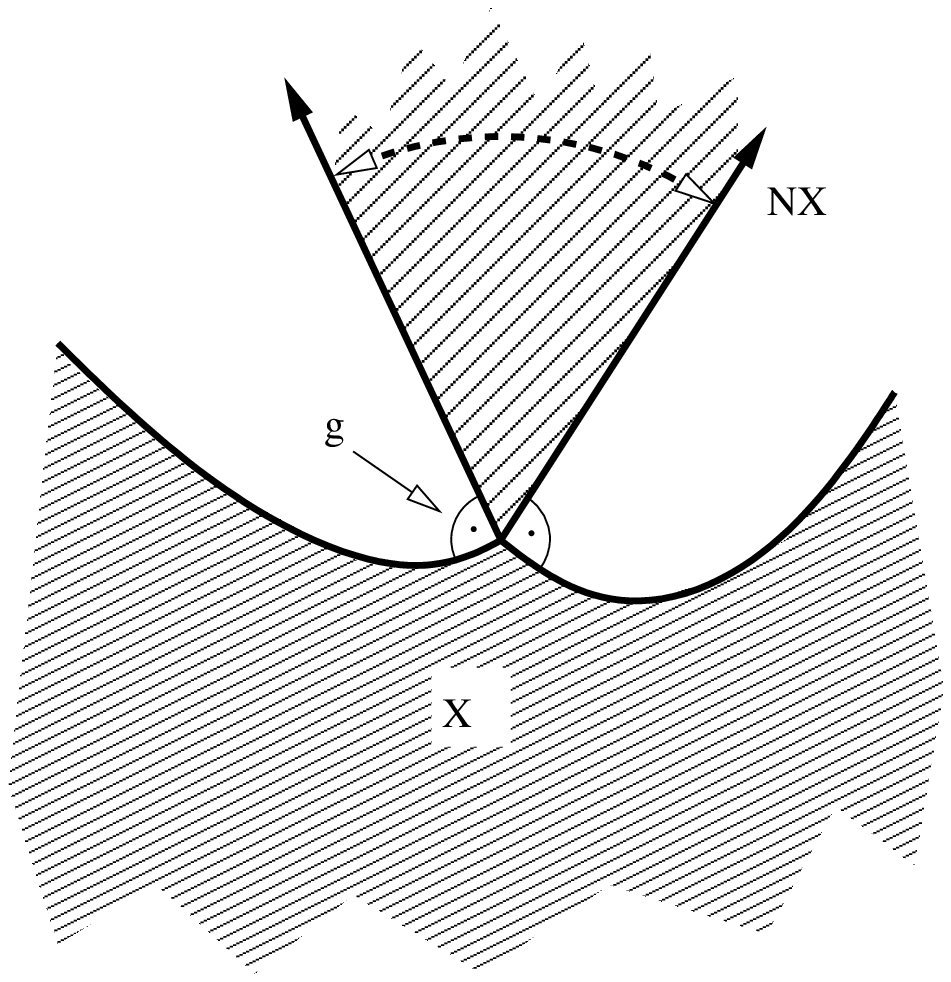}
  \caption{Proximal normal cone to the non-convex set $S$ at the point $z \in \partial S$.}
  \label{fig:prox_normal_cone}
\end{figure}

For the following result, see~\cite[Prop.~1.5]{ClaLedSteWol98}.

\begin{proposition}
  A vector $\zeta$ belongs to $N_S^P(z)$, if and only if there
  exists $\tau \ge 0$ (possibly depending on $\zeta$ and $z$) such that
  \begin{equation}\label{eq:npchar}
    \inner{\zeta}{y-z} \le \tau \norm{y-z}^2
  \end{equation}
  for all $y \in S$.
\end{proposition}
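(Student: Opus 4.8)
The plan is to prove both implications directly from the defining inequality of a nearest point, using nothing more than the polarization of the squared norm. Recall that $z\in\proj_S(w)$ holds precisely when $\norm{w-z}\le\norm{w-y}$ for every $y\in S$, and that for any vectors one has the identity $\norm{w-y}^2=\norm{w-z}^2-2\inner{w-z}{y-z}+\norm{y-z}^2$. The whole argument is an exploitation of this single identity, so I expect no serious technical difficulty.

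For the forward implication I would take $\zeta\in N_S^P(z)$, so that by definition $\zeta=t(w-z)$ for some $t\ge 0$ and some external point $w$ with $z\in\proj_S(w)$. If $t=0$ then $\zeta=0$ and \eref{eq:npchar} holds trivially with $\tau=0$. If $t>0$, the nearest-point inequality $\norm{w-z}^2\le\norm{w-y}^2$ combined with the identity above yields $2\inner{w-z}{y-z}\le\norm{y-z}^2$ for all $y\in S$; multiplying by $t/2$ gives $\inner{\zeta}{y-z}\le\frac{t}{2}\norm{y-z}^2$, which is exactly \eref{eq:npchar} with $\tau=t/2$.

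For the converse I would start from \eref{eq:npchar}. If $\zeta=0$ there is nothing to prove, since $0\in N_S^P(z)$ via $t=0$; otherwise I may enlarge $\tau$ and assume $\tau>0$, as the inequality only weakens. The natural candidate for the external point is $w:=z+\frac{1}{2\tau}\zeta$, so that $w-z=\frac{1}{2\tau}\zeta$. Using \eref{eq:npchar} to bound $\inner{w-z}{y-z}=\frac{1}{2\tau}\inner{\zeta}{y-z}\le\frac12\norm{y-z}^2$ and substituting into the identity shows $\norm{w-y}^2-\norm{w-z}^2\ge 0$ for every $y\in S$; hence $z\in\proj_S(w)$, and since $\zeta=2\tau(w-z)$ we conclude $\zeta\in N_S^P(z)$.

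There is no genuine obstacle here, as the statement is elementary Hilbert-space geometry; the only points demanding care are the two degenerate cases, namely $t=0$ (equivalently $\zeta=0$) in the forward direction and $\tau=0$ in the converse, both disposed of by the trivial membership $0\in N_S^P(z)$ together with the freedom to enlarge $\tau$. I would also remark that the argument uses neither closedness of $S$ nor nonemptiness of $\proj_S(w)$: in the converse we verify \emph{by hand} that the constructed $w$ has $z$ as a nearest point of $S$, rather than appealing to any existence result for projections.
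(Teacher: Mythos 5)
Your proof is correct and complete. Note that the paper itself does not prove this proposition at all --- it simply cites \cite[Prop.~1.5]{ClaLedSteWol98} --- so your argument supplies the proof the paper omits, and it is essentially the standard one from that reference: expand $\norm{w-y}^2$ around the nearest point $z$ for the forward direction, and for the converse construct the external point $w = z + \frac{1}{2\tau}\zeta$ and verify directly that $z \in \proj_S(w)$. Your treatment of the degenerate cases ($t=0$, $\zeta=0$, $\tau=0$) is careful, and your closing observation --- that the argument requires neither closedness of $S$ nor any existence result for projections, since nearest-point status of the constructed $w$ is checked by hand --- is accurate.
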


In the following we will denote, for given $z \in S$
and $\zeta \in N_S^P(z)$, by $\tau(\zeta,z)$
the smallest $\tau \ge 0$ for which~\eref{eq:npchar} holds.
Then the function $\tau$ is positively homogeneous
with respect to its first variable, that is,
$\tau(t\zeta,z) = t\tau(\zeta,z)$ whenever $\zeta\in N_S^P(z)$
and $t > 0$ (note that the fact that $N_S^P(z)$ is a cone
implies that $t\zeta \in N_S^P(z)$).

\begin{theorem}\label{th:pr_source}
  Assume that $g^\dagger \in \mathcal{X}$ satisfies $Tg^\dagger = h$.
  In addition, assume that $\partial_\nu g^\dagger = 0$ on $\partial \Omega$.
  Denote moreover by $T^*\colon L^2(\Omega;\R^k) \to L^2(\Omega;\R^k)$
  the adjoint of $T$
  and let $N_{\mathcal{X}}^P(g^\dagger) \subset L^2(\Omega;\R^{k})$
  be the proximal normal cone to the set $\mathcal{X}$ at the point $g^\dagger$.

  Assume that there exist $\omega\in L^2(\Omega;\R^k)$ and $\zeta \in N_{\mathcal{X}}^P(g^\dagger)$
  such that
  \[
  2(\mu g^\dagger - \Delta g^\dagger) = T^*\omega + \zeta.
  \]

  \begin{itemize}
  \item If $\mu > 0$ and $\tau(\zeta,g^\dagger) < \mu$, then~\eref{eq:varineq}
    holds for every $g \in \mathcal{X}$ with $C = \norm{\omega}$ and $\beta = 1-\tau(\zeta,g^\dagger)/\mu$.
  \item If $\mu = 0$, assume in addition that $Tc \neq 0$ for every
    non-zero constant function $c\colon\Omega\to\R^k$ and that
    \[
    E :=  A^2(\norm{T}D(T)^{-1}+D(T)^{-1}+1)^2\,\tau(\zeta,g^\dagger)
    \]
    with $A$ and $D(T)$ as in Lemma~\ref{le:equiv_norm}
    satisfies $E < 1$.
    Then for every $s > 0$ the inequality~\eref{eq:varineq}
    holds with $\beta = 1-E$ and $C = \norm{\omega}+sE$
    whenever $g \in \mathcal{X}$ satisfies $\norm{T(g-g^\dagger)} < s$.
  \end{itemize}
\end{theorem}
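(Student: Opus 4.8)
The plan is to convert the source condition into a lower bound for $\norm{g}_\mu^2 - \norm{g^\dagger}_\mu^2$ valid for every competitor $g \in \mathcal{X}$, and then read off $\beta$ and $C$ from the definition of \eref{eq:varineq}. Writing $d := g - g^\dagger$ and using that $\norm{\cdot}_\mu$ is induced by the inner product $\inner{u}{v}_\mu = \mu\inner{u}{v} + \inner{\nabla u}{\nabla v}$, I first expand exactly
\[
\norm{g}_\mu^2 - \norm{g^\dagger}_\mu^2 = \norm{d}_\mu^2 + 2\inner{g^\dagger}{d}_\mu.
\]
The cross term is where the hypotheses enter. Integrating the gradient part by parts and using the assumption $\partial_\nu g^\dagger = 0$ on $\partial\Omega$ to discard the boundary integral gives $\inner{\nabla g^\dagger}{\nabla d} = -\inner{\Delta g^\dagger}{d}$, hence $2\inner{g^\dagger}{d}_\mu = \inner{2(\mu g^\dagger - \Delta g^\dagger)}{d}$. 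Substituting the source condition $2(\mu g^\dagger - \Delta g^\dagger) = T^*\omega + \zeta$ splits this into a $T^*\omega$-contribution and a $\zeta$-contribution.

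The two contributions are estimated by the tools tailored to them. For the first I move the adjoint across and apply Cauchy--Schwarz, $\inner{T^*\omega}{d} = \inner{\omega}{Td} \ge -\norm{\omega}\norm{Td}$, so this term is absorbed by the $C\norm{T(g-g^\dagger)}$ slot of \eref{eq:varineq} as soon as $C \ge \norm{\omega}$. For the second, the characterization \eref{eq:npchar} of the proximal normal cone at $g^\dagger$, evaluated at the feasible point $g \in \mathcal{X}$, controls $\inner{\zeta}{d}$ in terms of $\tau(\zeta,g^\dagger)\norm{d}_{L^2}^2$. Collecting terms I reach the master inequality
\[
\norm{g}_\mu^2 - \norm{g^\dagger}_\mu^2 + C\norm{Td} \ge \norm{d}_\mu^2 - \tau(\zeta,g^\dagger)\norm{d}_{L^2}^2 + (C - \norm{\omega})\norm{Td},
\]
so that everything now hinges on dominating the zeroth-order term $\tau\norm{d}_{L^2}^2$ by the regularizing term $\norm{d}_\mu^2$.

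If $\mu > 0$ this is immediate, since $\norm{d}_{L^2}^2 \le \mu^{-1}\norm{d}_\mu^2$; taking $C = \norm{\omega}$ the right-hand side is at least $(1 - \tau(\zeta,g^\dagger)/\mu)\norm{d}_\mu^2$, and the smallness assumption $\tau(\zeta,g^\dagger) < \mu$ makes $\beta = 1 - \tau(\zeta,g^\dagger)/\mu$ positive.

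The case $\mu = 0$ is the real obstacle, because then $\norm{d}_\mu = \norm{\nabla d}$ gives no zeroth-order control and $d$ carries no boundary or mean constraint, so a naive Poincar\'e estimate is unavailable. Here I would call on Lemma~\ref{le:equiv_norm}: inequality \eref{eq:equiv_norm3} bounds $\norm{d}_{L^2}$ by $K\norm{d}_T$ with $K = A(\norm{T}D(T)^{-1} + D(T)^{-1} + 1)$ and $\norm{d}_T^2 = \norm{\nabla d}^2 + \norm{Td}^2$, whence $\tau(\zeta,g^\dagger)\norm{d}_{L^2}^2 \le E\norm{\nabla d}^2 + E\norm{Td}^2$ with exactly the constant $E$ of the statement. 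The first summand combines with $\norm{\nabla d}^2$ into $(1-E)\norm{d}_\mu^2$, and $E < 1$ keeps $\beta = 1 - E$ positive. The residual quadratic $E\norm{Td}^2$ is the genuinely delicate piece: it cannot be absorbed globally, which is what forces the local formulation. On the set where $\norm{Td} < s$ I linearize $\norm{Td}^2 < s\norm{Td}$ and take $C = \norm{\omega} + sE$, so that the surplus $(C - \norm{\omega})\norm{Td} = sE\norm{Td}$ dominates it. Assembling these estimates yields \eref{eq:varineq} with the advertised $\beta$ and $C$; the only subtle points are the boundary-term cancellation powered by $\partial_\nu g^\dagger = 0$ and, for $\mu = 0$, trading the missing Poincar\'e inequality for Lemma~\ref{le:equiv_norm} at the cost of the localization radius $s$.
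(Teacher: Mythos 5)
Your proof is correct and follows essentially the same route as the paper's: the same expansion of $\norm{g}_\mu^2-\norm{g^\dagger}_\mu^2$ via integration by parts using $\partial_\nu g^\dagger=0$, the same splitting of the source condition into a $T^*\omega$ term (handled by Cauchy--Schwarz) and a $\zeta$ term (handled by the proximal-normal-cone inequality with constant $\tau(\zeta,g^\dagger)$), and for $\mu=0$ the same invocation of Lemma~\ref{le:equiv_norm} together with the localization $\norm{T(g-g^\dagger)}<s$. If anything, you make explicit the final absorption step $E\norm{T(g-g^\dagger)}^2\le sE\norm{T(g-g^\dagger)}$, which yields $C=\norm{\omega}+sE$ and which the paper leaves implicit.
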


\begin{proof}
  First note that
  \begin{eqnarray*}
  \inner{2\mu g^\dagger - 2\Delta g^\dagger-\zeta}{g^\dagger-g}
  &= \inner{T^*\omega}{g^\dagger-g}\\
  &= \inner{\omega}{T(g^\dagger-g)}\\
  &\le \norm{\omega}\norm{T(g^\dagger-g)}.
  \end{eqnarray*}
  Now the assumption $\zeta \in N_{\mathcal{X}}^P(g^\dagger)$ implies that
  \[
  \inner{\zeta}{g^\dagger-g} \le \tau(\zeta,g^\dagger)\,\norm{g^\dagger-g}^2
  \]
  for all $g \in \mathcal{X}$.
  In addition, Stoke's theorem and the assumption $\partial_\nu g^\dagger = 0$
  on $\partial\Omega$ imply that
  \begin{eqnarray*}
    2\inner{\mu g^\dagger - \Delta g^\dagger}{g^\dagger-g}
    = 2\mu\inner{g^\dagger}{g^\dagger-g} + 2\inner{\nabla g^\dagger}{\nabla(g^\dagger-g)}\\
    = \norm{g^\dagger-g}_\mu^2 + \norm{g^\dagger}_\mu^2 - \norm{g}_\mu^2.
  \end{eqnarray*}
  Thus we obtain the estimate
  \begin{equation}\label{eq:est1}
    \norm{\omega}\norm{T(g-g^\dagger)}
    \ge \norm{g^\dagger}_\mu^2 - \norm{g}_\mu^2 + \norm{g^\dagger-g}_\mu^2 - \tau(\zeta,g^\dagger)\,\norm{g^\dagger-g}^2.
  \end{equation}
  In the case $\mu > 0$, it follows that
  \[
  (1-\tau(\zeta,g^\dagger)/\mu) \norm{g^\dagger-g}_\mu^2 \le \norm{g^\dagger}_\mu^2 - \norm{g}_\mu^2 + \norm{\omega}\norm{T(g-g^\dagger)},
  \]
  which proves the first part of the assertion.

  On the other hand, if $\mu = 0$, then~\eref{eq:est1} and Lemma~\ref{le:equiv_norm} imply that
  \[
  \fl
  (1-E)\norm{\nabla(g^\dagger-g)}^2
  \le \norm{\nabla g}^2-\norm{\nabla g^\dagger}^2 + \norm{\omega}\norm{T(g-g^\dagger)}
  + E\norm{T(g-g^\dagger)}^2.
  \]
  Thus~\eref{eq:varineq} holds for $\norm{T(g-g^\dagger)} \le s$.
\end{proof}

\begin{corollary}\label{co:rate}
  Assume that the assumptions of Theorem~\ref{th:pr_source} are satisfied.
  Then we have, with the notation of Lemma~\ref{le:var_ineq}, the estimates
  \[
  \fl
  \bigl(1-\tau(\zeta,g^\dagger)/\mu\bigr) \norm{g_\alpha^{\delta,\gamma}-g^\dagger}_\mu^2
  \le \frac{(\gamma+\delta\norm{g^\dagger})^2}{\alpha}
  + \norm{\omega}\bigl(\gamma+\delta D_\mu(\alpha,\delta,\gamma)\bigr) + \frac{\norm{\omega}^2\alpha}{4}
  \]
  in the case $\mu > 0$, and
  \begin{eqnarray*}
  \fl
  (1-E)\norm{\nabla(g_\alpha^{\delta,\gamma}-g^\dagger)}^2\\
  \le \frac{(\gamma+\delta\norm{g^\dagger})^2}{\alpha}
  + (\norm{\omega}+sE)\bigl(\gamma+\delta D_0(\alpha,\delta,\gamma)\bigr) + \frac{(\norm{\omega}+sE)^2\alpha}{4}
  \end{eqnarray*}
  in the case $\mu = 0$.
  In particular, we have in both cases with a parameter choice
  $\alpha \asymp \max\{\delta,\gamma\}$ a convergence rate
  \[
  \norm{g_\alpha^{\delta,\gamma}-g^\dagger}_\mu^2 = O(\max\{\delta,\gamma\}).
  \]
\end{corollary}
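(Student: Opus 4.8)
The plan is to obtain Corollary~\ref{co:rate} as the direct composition of Theorem~\ref{th:pr_source} and Lemma~\ref{le:var_ineq}: the theorem certifies that the variational inequality~\eref{eq:varineq} required by the lemma holds, with the explicit constants $\beta$ and $C$ listed in each of its two cases, and the lemma then converts~\eref{eq:varineq} into the two quantitative error bounds. Concretely, I would first invoke Theorem~\ref{th:pr_source}. For $\mu>0$ it supplies~\eref{eq:varineq} on all of $\mathcal{X}$ with $C=\norm{\omega}$ and $\beta=1-\tau(\zeta,g^\dagger)/\mu$; substituting these two values into the first estimate of Lemma~\ref{le:var_ineq}, and noting that $\norm{\cdot}_\mu^2$ is the full weighted norm, yields verbatim the displayed inequality for the case $\mu>0$, with no further argument needed since $g_\alpha^{\delta,\gamma}\in\mathcal{X}=L$ automatically.

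For $\mu=0$ the theorem only supplies~\eref{eq:varineq} on the set $L=\set{g\in\mathcal{X}}{\norm{T(g-g^\dagger)}<s}$, with $\beta=1-E$ and $C=\norm{\omega}+sE$, and since $\norm{\cdot}_\mu^2$ reduces to $\norm{\nabla\cdot}^2$, plugging these constants into Lemma~\ref{le:var_ineq} (with $D_0$ in place of $D_\mu$) gives the second displayed inequality---but only once it is known that the minimizer $g_\alpha^{\delta,\gamma}$ actually lies in $L$. Establishing this membership is the crux of the argument and the one genuinely nonroutine step, because the bound on $\norm{T(g_\alpha^{\delta,\gamma}-g^\dagger)}$ that would certify $g_\alpha^{\delta,\gamma}\in L$ is itself a consequence of Lemma~\ref{le:var_ineq}, which presupposes $g_\alpha^{\delta,\gamma}\in L$. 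To break this circularity I would \emph{not} use the second estimate of the lemma, but rather the unconditional a priori bound $\norm{g_\alpha^{\delta,\gamma}}\le D_0(\alpha,\delta,\gamma)$ derived inside the proof of Lemma~\ref{le:var_ineq} (that bound rests only on optimality and Lemma~\ref{le:equiv_norm}, not on~\eref{eq:varineq}). Combining it with the triangle inequality $\norm{T(g_\alpha^{\delta,\gamma}-g^\dagger)}\le\delta\norm{g_\alpha^{\delta,\gamma}}+\norm{T^\delta g_\alpha^{\delta,\gamma}-h^\gamma}+\gamma$ and the optimality bound $\norm{T^\delta g_\alpha^{\delta,\gamma}-h^\gamma}\le\delta\norm{g^\dagger}+\gamma+\sqrt{\alpha}\norm{\nabla g^\dagger}$, one gets a crude but unconditional estimate on $\norm{T(g_\alpha^{\delta,\gamma}-g^\dagger)}$ that tends to $0$ under the parameter choice; hence for all sufficiently small $\delta,\gamma$ one has $g_\alpha^{\delta,\gamma}\in L$, after which~\eref{eq:varineq} and Lemma~\ref{le:var_ineq} apply legitimately.

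Finally, for the asserted rate I would insert $\alpha\asymp\max\{\delta,\gamma\}=:\rho$ into both displayed estimates and bound each summand. The leading term $(\gamma+\delta\norm{g^\dagger})^2/\alpha$ is $O(\rho^2)/\rho=O(\rho)$; the term $\tfrac14 C^2\alpha$ (and its $\mu=0$ analogue) is $O(\alpha)=O(\rho)$; and in the mixed term $C(\gamma+\delta D_\mu)$ one checks that $D_\mu$ and $D_0$ stay bounded up to lower order as $\rho\to 0$---for $\mu=0$ because $\min\{\sqrt\alpha,1\}=\sqrt\alpha$ renders the fraction $(\delta\norm{g^\dagger}+\gamma)/\sqrt\alpha+\norm{\nabla g^\dagger}=O(1)$---so that $\gamma+\delta D_\mu=O(\rho)$ and the whole term is $O(\rho)$. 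Since $\beta$ and $1-E$ are fixed positive constants, dividing through yields $\norm{g_\alpha^{\delta,\gamma}-g^\dagger}_\mu^2=O(\rho)=O(\max\{\delta,\gamma\})$ in both cases, which is the claimed convergence rate. I expect everything here apart from the self-consistency verification $g_\alpha^{\delta,\gamma}\in L$ for $\mu=0$ to be pure substitution and term-by-term asymptotics.
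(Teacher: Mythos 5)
Your proposal is correct and takes essentially the same approach the paper intends: Corollary~\ref{co:rate} is stated there without any proof precisely because it is the direct composition of Theorem~\ref{th:pr_source} (which supplies~\eref{eq:varineq} with the constants $\beta$ and $C$ in each case) and Lemma~\ref{le:var_ineq}, followed by the term-by-term asymptotics under $\alpha \asymp \max\{\delta,\gamma\}$ that you carry out. Your explicit verification that $g_\alpha^{\delta,\gamma} \in L$ when $\mu = 0$ --- breaking the apparent circularity via the unconditional bound $\norm{g_\alpha^{\delta,\gamma}} \le D_0(\alpha,\delta,\gamma)$, which indeed rests only on optimality and Lemma~\ref{le:equiv_norm} --- is a point the paper leaves implicit (it silently inherits the caveat ``whenever $g_\alpha^{\delta,\gamma} \in L$'' from the lemma), and you handle it correctly.
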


\begin{rem}
  Consider for the moment the setting where the constraint set $\mathcal{X}$
  is closed and convex.
  Then the convexity of $\mathcal{X}$ implies that
  $\tau(\zeta,g^\dagger) = 0$ whenever $\zeta\in N_{\mathcal{X}}^P(g^\dagger)$;
  in other words, the proximal normal cone $N_{\mathcal{X}}^P(g^\dagger)$
  coincides with the (usual) normal cone
  $N_{\mathcal{X}}(g^\dagger) = \set{\zeta}{\inner{\zeta}{\tilde{g}-g} \le 0 \textrm{ for all } \tilde{g}\in \mathcal{X}}$.
  Thus in the condition $T^*\omega + \zeta = 2(\mu g^\dagger-\Delta g^\dagger)$
  for some $\zeta \in N_{\mathcal{X}}^P(g^\dagger)$ no smallness condition is required
  for $\zeta$, and therefore this condition reduces to the classical projected
  source condition found in~\cite{ChaKun94a,Neu87}.
\end{rem}

\begin{rem}
  The conditions and results of Theorem~\ref{th:pr_source}
  and Corollary~\ref{co:rate} can also be translated into the context
  of convex analysis with subgradients and Bregman distances
  (see~\cite{BurOsh04,HofKalPoeSch07,SchGraGroHalLen09}).
  Recall that the subdifferential $\partial\mathcal{R}(g^\dagger) \subset X$
  of a convex mapping $\mathcal{R}\colon X \to [0,+\infty]$ at $g^\dagger$
  consists of all elements $\xi \in X$ satisfying
  $\mathcal{R}(g) \ge \mathcal{R}(g^\dagger) + \inner{\xi}{g-g^\dagger}$
  for all $g \in X$.
  Moreover, the Bregman distance $\mathcal{D}^\xi(\cdot;g^\dagger)$
  is defined as
  \[
  \mathcal{D}^\xi(g;g^\dagger)
  := \mathcal{R}(g)-\mathcal{R}(g^\dagger)-\inner{\xi}{g-g^\dagger}.
  \]

  If $\mathcal{R}(g) := \norm{g}_\mu^2$
  (setting $\mathcal{R}(g) = +\infty$ if $g \not\in H^1(\Omega;\R^k)$),
  we obtain that the subdifferential is non-empty if and only if
  $\partial_\nu g^\dagger = 0$ on $\partial\Omega$.
  Moreover, in this case its unique element is
  the function $2(\mu g^\dagger - \Delta g^\dagger)$.
  Finally, it is easy to see that the Bregman distance
  between with $g$ and $g^\dagger$ with respect to $\norm{\cdot}_\mu^2$
  is precisely $\norm{g-g^\dagger}_\mu^2$.

  In this setting, Corollary~\ref{co:rate} with $\mu > 0$ reads as follows:
  If there exist $\xi \in \partial\mathcal{R}(g^\dagger)$ and
  $\zeta \in N_{\mathcal{X}}^P(g^\dagger)$ with $\tau(\zeta,g^\dagger) < \mu$,
  then
  \[
  \mathcal{D}^\xi(g_\alpha^{\delta,\gamma}) = O(\max\{\delta,\gamma\}).
  \]

  Note moreover that in~\cite{Gra10} a theory based on abstract convex analysis
  has been developed in order to derive convergence rates for non-convex
  regularization terms.
  Again, the results of Corollary~\ref{co:rate} can be seen as special cases
  of the results in~\cite[Section~4]{Gra10}
  by realizing that the function $2(\mu g^\dagger-\Delta g^\dagger) - \zeta$
  is a generalized subgradient of the mapping
  \[
  \mathcal{R}(g) = \cases{
    \norm{g}_\mu^2 & if $g \in \mathcal{X}$,\\
    + \infty & else.
  }
  \]
\end{rem}

\section{Extension to the stochastic setting}

In this section, we allow the approximation errors $\norm{T^{\delta}-T}$ and $\norm{h^{\gamma}-h}$
to be stochastic and depend on the sample size $n$.
More precisely, $T^{\delta}$ is a nonparametric estimator of the operator $T$
depending on the random sample $(Y_i,X_i,W_i)_{i=1,\ldots,n}$ and we will denote it by
$\hat{T}$. Similarly, $h^{\gamma}$ is a nonparametric estimator of the function $h$
depending on the random sample $(Y_i,X_i,W_i)_{i=1,\ldots,n}$ and we will denote it by $\hat{h}$ .
Finally, the approximated regularized solution $g_\alpha^{\delta,\gamma}$ will be denoted by $\hat{g}_{\alpha}$.

In the following, we will derive convergence rates in probability
for $\hat{g}_\alpha$.
To that end,
recall that a sequence of random variables $Q_n$, $n \in \N$,
in a normed space is bounded in probability, if for every $\epsilon > 0$ there exists
$C > 0$ and $n_0 \in \N$ such that
\[
\mathbb{P}(\norm{Q_n} > C) < \epsilon
\qquad
\textrm{ for all } n \ge n_0.
\]
In this case, we say that
\[
Q_n = O_P(1).
\]
Similarly, if $c_n$, $n \in \N$, is any real sequence,
we write
\[
Q_n = O_P(c_n)
\qquad\textrm{ if }\qquad
\frac{Q_n}{c_n} = O_P(1).
\]

Note that an alternative to convergence rates in probability
is the derivation of convergence rates in expectation,
which has been carried out for Tikhonov regularization
and generalizations in~\cite{BisHohMun04,BisHohMunRuy07}.
In this paper, however, we will restrict ourselves to rates in probability
in order to be able to exploit the results in~\cite{DFFR11} on
unconstrained instrumental regression.

Following \cite{DFFR11}, we introduce the kernel approach with generalized kernel functions of order $l$
for estimating $\hat{T}$ and $\hat{h}$. Note that the kernel is considered in generalized
form only to overcome edge effects. Let $\sigma \equiv \sigma_n \rightarrow 0$ denote a bandwidth and
$K_{\sigma}(\cdot,\cdot)$ denote a univariate generalized kernel function with the properties
$K_{\sigma}(u,t)=0$ if $u>t$ or $u<t-1$; for all $t\in[0,1]$,
\begin{equation*}
\sigma^{-(j+1)}\int_{t-1}^{t}u^{j}K_{{\sigma}}(u,t)du=
\cases{
1& if $j = 0$,\\
0& if $1 \le j \le l-1$.
}
\end{equation*}
We call $K_{\sigma}(\cdot,\cdot)$ a univariate generalized kernel function of order $l$
(see \cite{Muller1991}). A special class of multivariate generalized kernel functions
of order $l$ is given by that of products of univariate generalized kernel functions of order $l$.
Let $K_{X,\sigma}$ and $K_{W,\sigma}$ denote two generalized multivariate kernel functions of dimension $k+1$
and $K_{Y,\sigma}$ a kernel function of dimension $1$.
First we estimate the density functions $f_{YW}$, $f_{XW}$ and $f_{W}$.
Note that, for simplicity of notation, we use the same bandwidth to estimate the three densities
\begin{eqnarray*}
\hat{f}_{YW}(y,w) &= \frac{1}{n\sigma^{k+2}}\sum_{i=1}^nK_{Y,\sigma}(y-Y_i,y)K_{W,\sigma}(w-W_i,w),\\
\hat{f}_{XW}(x,w) &= \frac{1}{n\sigma^{2k+2}}\sum_{i=1}^nK_{X,\sigma}(x-X_i,x)K_{W,\sigma}(w-W_i,w),\\
\hat{f}_{W}(w) &= \frac{1}{n\sigma^{k+1}}\sum_{i=1}^nK_{W,\sigma}(w-W_i,w).
\end{eqnarray*}

Then the estimators of $T$ and $h$ are
\begin{eqnarray*}
\hat{T}\psi(w)&=\int\psi(x)\frac{\hat{f}_{XW}(x,w)}{\hat{f}_W(w)}dx, \\
\hat{h}(w)&=\int y \frac{\hat{f}_{YW}(y,w)}{\hat{f}_W(w)}dy.
\end{eqnarray*}

In order to derive a rate of convergence for $\hat{g}_{\alpha}$, we
require

\begin{assumption}\label{ass:stoch}
  We assume that the following conditions are satisfied:
  \begin{enumerate}
  \item The data $(Y_i,X_i,W_i)$, $i=1,\ldots,n$, define an \emph{i.i.d.}\ sample of $(Y,X,W)$.
  \item The probability density function $f_{YXW}$ is $l$ times continuously
    differentiable in the interior of $\Omega_Y \times \Omega \times \Omega_W$
    and bounded away from zero on $\Omega_Y \times \Omega \times \Omega_W$.
  \item The conditional expectation $\E(\epsilon^2 |W=w)$ is uniformly bounded on $\Omega_W$.
  \item Both multivariate kernels $K_{X,\sigma}$ and $K_{W,\sigma}$ are product kernels
    generated from the univariate generalized kernel function $K_{\sigma}$
    with the following properties:
    \begin{enumerate}
    \item The kernel function $K_{\sigma}$ is a generalized kernel function of order $l$.
    \item For each $t \in [0,1]$, the function $K_{\sigma}(\sigma\cdot,t)$ is supported on a
      set of the form $[(t-1)/\sigma,t/\sigma]\cap \mathcal{K}$ where $\mathcal{K}$ is a compact interval
      not depending on $t$ and $\sup_{\sigma>0,t\in [0,1],u\in \mathcal{K}}\abs{K_{\sigma}(\sigma u,t)} < \infty$.
    \end{enumerate}
  \item The bandwidth parameter satisfies $\sigma \rightarrow 0$ and $(n\sigma^{2k+2})^{-1}\log(n) \rightarrow 0$.
  \end{enumerate}
\end{assumption}

\begin{proposition}
  Suppose Assumption~\ref{ass:stoch} holds. Let $\rho=\min\{l,k+1\} \geq 2$
  and $\mu \ge 0$.
  Let
  \[
  g^\dagger \in \argmin\set{\norm{g}_\mu^2}{Tg = h,\ g\in \mathcal{X}}.
  \]
  and
  \[
  \hat{g}_\alpha \in \argmin\set{\mathcal{T}_\alpha(g;\hat{T},\hat{h})}{g \in \mathcal{X}}.
  \]

  Assume that $\partial_\nu g^\dagger = 0$ on $\partial \Omega$.
  Denote moreover by $T^*\colon L^2(\Omega;\R^k) \to L^2(\Omega;\R^k)$
  the adjoint of $T$
  and let $N_{\mathcal{X}}^P(g^\dagger) \subset L^2(\Omega;\R^{k+1})$
  be the proximal normal cone to the set $\mathcal{X}$ at the point $g^\dagger$.

  \begin{enumerate}
  \item Let $\mu > 0$.
    Assume that there exist $\omega\in L^2(\Omega;\R^k)$ and $\zeta \in N_{\mathcal{X}}^P(g^\dagger)$
    with $\tau(\zeta,g^\dagger) < \mu$
    such that
    \[
    2(\mu g^\dagger - \Delta g^\dagger) = T^*\omega + \zeta.
    \]
    Then the estimate
    \[
    \norm{\hat{g}_\alpha - g^\dagger}_\mu^2 = O_P\left((1+\sqrt{\alpha}) \frac{\frac{1}{n \sigma^{2k+2}}+\sigma^{2\rho}}{\alpha}
    + \frac{1}{n \sigma^{k+1}}+\sigma^{\rho} + \alpha\right)
    \]
    holds.
  \item Let $\mu = 0$ and assume that $\alpha \to 0$, $\Bigl(\frac{1}{n\sigma^{2k+2}}+\sigma^{2\rho}\Bigr)/\alpha \to 0$
    as $n \to \infty$.
    Assume moreover that there exist $\omega\in L^2(\Omega;\R^k)$ and $\zeta \in N_{\mathcal{X}}^P(g^\dagger)$
    such that
    \[
    2(\mu g^\dagger - \Delta g^\dagger) = T^*\omega + \zeta
    \]
    and
    \[
    A^2(\norm{T}D(T)^{-1}+D(T)^{-1}+1)^2\,\tau(\zeta,g^\dagger) < 1,
    \]
    where $A$ and $D(T)$ are as in Lemma~\ref{le:equiv_norm}.
    Then the estimate
    \[
    \norm{\nabla\hat{g}_\alpha - \nabla g^\dagger}^2 = O_P\left( \frac{\frac{1}{n \sigma^{2k+2}}+\sigma^{2\rho}}{\alpha}
    + \frac{1}{n \sigma^{k+1}}+\sigma^{\rho} + \alpha\right)
    \]
    holds.
  \end{enumerate}

  In particular, if
  \[
  \alpha \asymp n^{-\frac{\rho}{2(k+\rho+1)}}
  \qquad\textrm{ and }\qquad
  \sigma \asymp n^{-\frac{1}{2(k+\rho+1)}},
  \]
  then we obtain in both cases the rate
  \[
  \norm{\hat{g}_\alpha - g^\dagger}_\mu^2 = O_P\bigl(n^{-\frac{\rho}{2(k+\rho+1)}}\bigr).
  \]
\end{proposition}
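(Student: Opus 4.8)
The plan is to derive the stochastic rates by feeding the random perturbation levels $\delta := \norm{\hat{T}-T}$ and $\gamma := \norm{\hat{h}-h}$ into the deterministic estimates already established in Lemma~\ref{le:var_ineq} and Corollary~\ref{co:rate} (note that $\hat{g}_\alpha$ is exactly the minimizer $g_\alpha^{\delta,\gamma}$ for $T^\delta = \hat{T}$ and $h^\gamma = \hat{h}$). First I would record, following the kernel-estimation analysis of~\cite{DFFR11} under Assumption~\ref{ass:stoch}, the bias--variance orders
\[
\delta = O_P\Bigl(\frac{1}{\sqrt{n\sigma^{2k+2}}} + \sigma^\rho\Bigr),
\qquad
\gamma = O_P\Bigl(\frac{1}{\sqrt{n\sigma^{k+1}}} + \sigma^\rho\Bigr),
\]
where the variance exponents $2k+2$ and $k+1$ reflect the dimensions of $f_{XW}$ and of $f_{YW}$, $f_W$ after integrating out $y$, and the bias order $\sigma^\rho$ with $\rho = \min\{l,k+1\}$ comes from the generalized kernel of order $l$. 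Throughout, the deterministic inequalities are turned into statements in probability by fixing $\epsilon > 0$, working on the event where $\delta$ and $\gamma$ do not exceed deterministic sequences $\delta_n,\gamma_n$ of the above orders (which has probability at least $1-\epsilon$ for $n$ large), applying the deterministic bound on that event, and reading off the $O_P$-statement.

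For $\mu > 0$ this is essentially immediate. The first part of Theorem~\ref{th:pr_source} supplies the variational inequality~\eref{eq:varineq} for \emph{every} $g \in \mathcal{X}$, so no localization is needed and the $\mu > 0$ estimate of Corollary~\ref{co:rate} applies to $\hat{g}_\alpha$ directly with $C = \norm{\omega}$ and $\beta = 1-\tau(\zeta,g^\dagger)/\mu > 0$. Substituting the orders above into its right-hand side, bounding cross terms by $\delta\gamma \le \frac{1}{2}(\delta^2+\gamma^2)$, and absorbing subdominant contributions (such as the $O_P(\delta)$ and $O_P(\gamma)$ summands, using $\sqrt{a} \le \frac{1}{2}(a/\alpha+\alpha)$ with $a := \frac{1}{n\sigma^{2k+2}}+\sigma^{2\rho}$ by the arithmetic--geometric mean inequality) yields the claimed order.

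The genuine difficulty is the case $\mu = 0$, for here Theorem~\ref{th:pr_source} only supplies~\eref{eq:varineq} \emph{locally}, namely for $g \in L := \set{g\in\mathcal{X}}{\norm{T(g-g^\dagger)} < s}$, so that Lemma~\ref{le:var_ineq} may be invoked only once $\hat{g}_\alpha \in L$ is known; moreover the side conditions $\hat{T}c \neq 0$ for constant $c$, $D(\hat{T}) > 0$, $\delta < \norm{T}$, and the smallness condition (the constant $E$ of Theorem~\ref{th:pr_source}, now formed with $\hat{T}$ and $D(\hat{T})$, staying below $1$) must be in force. I would dispose of the side conditions by continuity: since $\delta$ tends to zero in probability we have $\hat{T}\to T$ in operator norm, and as $D(\cdot)$ depends continuously on the operator (Lemma~\ref{le:equiv_norm}) while the deterministic constant satisfies $E < 1$ strictly, all four conditions hold with probability tending to one. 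The localization $\hat{g}_\alpha \in L$ I would obtain not by a circular appeal to the second estimate of Lemma~\ref{le:var_ineq}, but by consistency: the standing hypothesis $\bigl(\frac{1}{n\sigma^{2k+2}}+\sigma^{2\rho}\bigr)/\alpha \to 0$ forces $(\delta+\gamma)^2/\alpha \to 0$ in probability, so Proposition~\ref{pr:wellposed} yields subsequential $H^1$-convergence of $\hat{g}_\alpha$ to a solution of $Tg=h$ in $\mathcal{X}$; since any such limit $\tilde{g}$ obeys $T\tilde{g}=h=Tg^\dagger$, continuity of $T$ makes $\norm{T(\hat{g}_\alpha-g^\dagger)} = \norm{T\hat{g}_\alpha-h}$ converge to zero in probability, whence $\hat{g}_\alpha \in L$ with probability tending to one. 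On that event Lemma~\ref{le:var_ineq} applies with $\beta = 1-E$ and $C = \norm{\omega}+sE$, and substituting the stochastic orders into the $\mu = 0$ estimate gives the asserted bound.

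Finally, inserting the prescribed $\alpha \asymp n^{-\rho/(2(k+\rho+1))}$ and $\sigma \asymp n^{-1/(2(k+\rho+1))}$, I would check that the leading term $\bigl(\frac{1}{n\sigma^{2k+2}}+\sigma^{2\rho}\bigr)/\alpha$, the bias $\sigma^\rho$, and the penalty $\alpha$ all collapse to the common order $n^{-\rho/(2(k+\rho+1))}$, while $\frac{1}{n\sigma^{k+1}}$ is of strictly smaller order; this gives the final rate. I expect the self-referential structure of the $\mu = 0$ estimate, circumvented here by the a priori consistency furnished by Proposition~\ref{pr:wellposed}, to be the only real obstacle, the remainder being bookkeeping within the $O_P$-calculus.
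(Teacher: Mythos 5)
Your proposal is correct and follows essentially the same route as the paper's proof: the kernel-estimation rates from~\cite{DFFR11} are fed into the deterministic results (Lemma~\ref{le:var_ineq}, Theorem~\ref{th:pr_source}, Corollary~\ref{co:rate}), and for $\mu=0$ the consistency furnished by Proposition~\ref{pr:wellposed} secures both the localization $\norm{T(\hat{g}_\alpha-g^\dagger)}<s$ and the $O_P(1)$-boundedness of $D_0$, exactly as in the paper. Your treatment of the $\mu=0$ localization (via subsequential convergence and continuity of $T$, rather than the paper's one-line claim that $\hat{g}_\alpha$ converges in probability to $g^\dagger$) is, if anything, slightly more careful, and the minor discrepancy in the cited rate for $\norm{\hat{h}-h}$ is immaterial since both versions yield the stated final rate.
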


\begin{proof}
  Note first that the assumption that the density
  $f_{YXW}$ is bounded away from zero implies that the
  operator $T$ is bounded and satisfies $Tc \neq 0$
  for every constant function $c$.
  Moreover, in~\cite{DFFR11} the convergence rate result
  \begin{eqnarray*}
    \norm{\hat{T}-T}^2 &=& O_P\left(\frac{1}{n \sigma^{2k+2}}+\sigma^{2\rho}\right),\\
    \norm{\hat{h}-h}^2 &=& O_P\left(\frac{1}{n \sigma^{2k+2}}+\sigma^{2\rho}\right)
  \end{eqnarray*}
  has been derived under Assumption~\ref{ass:stoch}.
  Together with the results of Lemma~\ref{le:var_ineq}, Theorem~\ref{th:pr_source}
  and Corollary~\ref{co:rate}, this immediately proves the assertion in
  the case $\mu > 0$.

  In the case $\mu = 0$, note that the assumption on the behaviour
  of $\alpha$ and Proposition~\ref{pr:wellposed} imply
  that the regularized solutions $\hat{g}_\alpha$ converge in probability to $g^\dagger$.
  Moreover, the convergence in probability of $\hat{T}$ to $T$
  implies that $1/(D(T)-\norm{\hat{T}-T}) = O_P(1)$, and therefore,
  as $\sigma \to 0$ and $1/(n\sigma^{2k+2}) \to 0$,
  we obtain in the notation of Lemma~\ref{le:var_ineq} the estimate
  \[
  D_0(\alpha,\norm{\hat{T}-T},\norm{\hat{h}-h}) = O_P(1).
  \]
  Then the result follows again immediately from Lemma~\ref{le:var_ineq}, Theorem~\ref{th:pr_source}
  and Corollary~\ref{co:rate}.
\end{proof}

\section{Conclusion}

In this paper, we have studied the problem of nonparametric regression
in the presence of endogenous variables and
additional non-convex shape constraints.
The main motivation is the estimation of the consumer demand function,
which, according to standard microeconomic theory, satisfies certain
(non-linear) integrability conditions.
We have used instruments in order to tackle the issue of endogeneity,
which, in the case where the coupling between the instruments
and the explanatory variables is weak (that is, only given by a density),
leads to the solution of an ill-posed operator equation.

We propose to solve the resulting inverse problem
by (constrained) Tikhonov regularization using a weighted Sobolev norm
as a regularization term.
Because of the weak closedness of the constrained set in the Sobolev
space, the regularization method is convergent.
In addition, we have derived convergence rates under the additional
assumption that the true solution $g^\dagger$ satisfies a certain variational
inequality, which is shown to hold if $g^\dagger$ satisfies a projected
source condition. In contrast to the usual convex case, however,
this condition is coupled with a smallness condition.
The convergence rates are derived in both a deterministic and
a stochastic setting. In the latter situation we have the additional
problem that the correspondence between the instruments and the
explanatory variables, and thus the operator itself,
is not known exactly but has to be estimated in a first step.
Here we propose to use a kernel estimator, which allows us
to obtain rates in probability for the operator error in dependence
of the number of measurements.

\section*{References}


\begin{thebibliography}{20}

\bibitem{Ada75}
R.~A. Adams.
\newblock {\em {S}obolev Spaces}.
\newblock Academic Press, New York, 1975.

\bibitem{AngKru01}
J.D. Angrist and A.B. Krueger.
\newblock Instrumental variables and the search for identifications: from
  supply and demand to natural experiments.
\newblock {\em J. Econ. Persp.}, 15(4):69--85, 2001.

\bibitem{AubVes97}
G.~Aubert and L.~Vese.
\newblock A variational method in image recovery.
\newblock {\em SIAM J. Numer. Anal.}, 34(5):1948--1979, 1997.

\bibitem{BerryLEvinsohnPakes95}
S.~Berry, J.~Levinsohn, and A.~Pakes.
\newblock Automobile prices in market equilibrium.
\newblock {\em Econometrica} 63:841–-890, 1995.

\bibitem{BisHohMun04}
N.~Bissantz, T.~Hohage, and A.~Munk.
\newblock Consistency and rates of convergence of nonlinear {T}ikhonov
  regularization with random noise.
\newblock {\em Inverse Probl.}, 20(6):1773--1789, 2004.

\bibitem{BisHohMunRuy07}
N.~Bissantz, T.~Hohage, A.~Munk, and F.~Ruymgaart.
\newblock Convergence rates of general regularization methods for statistical
  inverse problems and applications.
\newblock {\em SIAM J. Numer. Anal.}, 45(6):2610--2636 (electronic), 2007.

\bibitem{bck2007}
R.~Blundell, X.~Chen, and D.~Kristensen.
\newblock Nonparametric IV estimation of shape-invariant Engel curves
\newblock {\em Econometrica}, 75:1613--1669, 2007.

\bibitem{BrownWalker89}
B.~Brown, and M.~Walker.
\newblock The random utility hypothesis and inference in demand systems.
\newblock {\em Econometrica}, 57:815–29, 1989.

\bibitem{BurOsh04}
M.~Burger and S.~Osher.
\newblock Convergence rates of convex variational regularization.
\newblock {\em Inverse Probl.}, 20(5):1411--1421, 2004.

\bibitem{ChaKun94a}
G.~Chavent and K.~Kunisch.
\newblock Convergence of {T}ikhonov regularization for constrained ill-posed
  inverse problems.
\newblock {\em Inverse Probl.}, 10:63--76, 1994.

\bibitem{Chen.et.al2008}
L.~Chen, G.D.~Smith, R.~Harbord, and S.J.~Lewis.
\newblock Alcohol Intake and Blood Pressure: A Systematic Review Implementing a Mendelian Randomization Approach.
\newblock {\em PLoS Med}, 5(3), 2008.	

\bibitem{ClaLedSteWol98}
F.~H. Clarke, Yu.~S. Ledyaev, R.~J. Stern, and P.~R. Wolenski.
\newblock {\em Nonsmooth analysis and control theory}, volume 178 of {\em
  Graduate Texts in Mathematics}.
\newblock Springer-Verlag, New York, 1998.

\bibitem{DFFR11}
S.~Darolles, Y.~Fan, J.P.~Florens, and E.~Renault.
\newblock Nonparametric Instrumental Regression.
\newblock {\em Econometrica}, 79:1541--1565, 2011.

\bibitem{DMS10}
V.~Didelez, S.~Meng, and N.A.~Sheehan.
\newblock Assumptions of {IV} Methods for Observational Epidemiology.
\newblock {\em Statistical Science}, 25:22--40, 2010.

\bibitem{Eic92}
B.~Eicke.
\newblock Iteration methods for convexly constrained ill-posed problems in
  {H}ilbert space.
\newblock {\em Numer. Funct. Anal. Optim.}, 13(5-6):413--429, 1992.

\bibitem{FleHof11}
J.~Flemming and B.~Hofmann.
\newblock Convergence rates in constrained {T}ikhonov regularization:
  equivalence of projected source conditions and variational inequalities.
\newblock {\em Inverse Probl.}, 27(8):085001, 11, 2011.

\bibitem{F03eswc}
J.P.~Florens.
\newblock {\em Inverse Problems and Structural Econometrics: The Example of Instrumental Variables},
in Advances in Economics and Econometrics: Theory and Applications, 284--311.
\newblock Cambridge: Cambridge University Press, 2003.

\bibitem{Gra10}
M.~Grasmair.
\newblock Generalized {B}regman distances and convergence rates for non-convex
  regularization methods.
\newblock {\em Inverse Probl.}, 26(11):115014, 2010.

\bibitem{HalHor05}
P.~Hall and J.L.~Horowitz.
\newblock Nonparametric methods for inference in the presence of instrumental
  variables.
\newblock {\em Ann. Statist.}, 33(6):2904--2929, 2005.

\bibitem{HofKalPoeSch07}
B.~Hofmann, B.~Kaltenbacher, C.~P{\"o}schl, and O.~Scherzer.
\newblock A convergence rates result for {T}ikhonov regularization in {B}anach
  spaces with non-smooth operators.
\newblock {\em Inverse Probl.}, 23(3):987--1010, 2007.

\bibitem{Lewbel01}
A.~Lewbel.
\newblock Demand systems with and without errors.
\newblock {\em American Economic Review} 91:611–18, 2001.

\bibitem{Matzkin07}
R.~Matzkin.
\newblock {\em Heterogeneous choice}. In N. W. Blundell, R. and T. Persson (Eds.), Advances in
Economics and Econometrics, Theory and Applications: Ninth World Congress of the Econometrics
Society, Volume 43:111--121.
\newblock Cambridge: Cambridge University Press, 2007.

\bibitem{Muller1991}
H.-G. M\"uller.
\newblock Smooth Optimum Kernel Estimators Near Endpoints.
\newblock {\em Biometrika}, 78:521-530, 1991.

\bibitem{Neu87}
A.~Neubauer.
\newblock Finite-dimensional approximation of constrained
  {T}ikhonov-regularized solutions of ill-posed linear operator equations.
\newblock {\em Math. Comp.}, 48(178):565--583, 1987.

\bibitem{NeuSch90}
A.~Neubauer and O.~Scherzer.
\newblock Finite-dimensional approximation of {T}ikhonov regularized solutions
  of nonlinear ill-posed problems.
\newblock {\em Numer. Funct. Anal. Optim.}, 11(1-2):85--99, 1990.

\bibitem{SchGraGroHalLen09}
O.~Scherzer, M.~Grasmair, H.~Grossauer, M.~Haltmeier, and F.~Lenzen.
\newblock {\em Variational methods in imaging}, volume 167 of {\em Applied
  Mathematical Sciences}.
\newblock Springer, New York, 2009.

\bibitem{Vanhems2010}
A.~Vanhems.
\newblock Nonparametric estimation of exact consumer surplus with endogeneity in price.
\newblock {\em Econometrics Journal}, 13(3):80-98, 2010.

\bibitem{VBBG11}
S.~Vansteelandt, J.~Bowden, M.~Babanezhad, and E.~Goetghebeur.
\newblock On Instrumental Variables Estimation of Causal Odds Ratios.
\newblock {\em Statistical Science}, 26:403--422, 2011.

\bibitem{Var92}
H.~R. Varian.
\newblock {\em Microeconomic Analysis}.
\newblock W.~W.~Norton \& Company, New York, third edition, 1992.

\bibitem{Ves01}
L.~Vese.
\newblock A study in the {BV} space of a denoising-deblurring variational
  problem.
\newblock {\em Appl. Math. Optim.}, 44(2):131--161, 2001.

\bibitem{W08book}
J.~Wooldridge.
\newblock {\em Introductory Econometrics: A Modern Approach}.
\newblock South-Western College Pub, fourth edition, 2008.	

\bibitem{Zie89}
W.~P. Ziemer.
\newblock {\em Weakly Differentiable Functions. {S}obolev Spaces and Functions
  of Bounded Variation}, volume 120 of {\em Graduate Texts in Mathematics}.
\newblock Springer Verlag, Berlin etc., 1989.

\end{thebibliography}

\end{document}